\numberwithin{equation}{section}
\newtheoremstyle{note}
{1em}
{1em}
{}
{}
{\bfseries}
{:}
{.5em}
{}
\newtheorem{theorem}{Theorem}[section]
\newtheorem{lemma}[theorem]{Lemma}
\newtheorem{proposition}[theorem]{Proposition}
\newtheorem{corollary}[theorem]{Corollary}
\newtheorem{definition}[theorem]{Definition}
\theoremstyle{note}
\newtheorem{question}[theorem]{Question}
\newcommand{\N}{{\mathbb{N}}}
\newcommand{\K}{{\mathbb{K}}}
\newcommand{\n}[1]{ \left\|#1\right\| }
\DeclareMathOperator{\tr}{tr}
\DeclareMathOperator{\spn}{span}
\DeclareMathOperator{\rank}{rank}
\title{Frame potential for finite-dimensional Banach spaces}
\author{J.A. Ch\'{a}vez-Dom\'{i}nguez}
\address{Department of Mathematics\\ University of Oklahoma\\ Norman OK , 73019 USA}
\email{jachavezd@ou.edu}
\author{ D. Freeman}
\address{Department of Mathematics and Statistics\\
St Louis University\\
St Louis, MO 63103  USA} \email{daniel.freeman@slu.edu}
\author{K. Kornelson}
\address{Department of Mathematics\\ University of Oklahoma\\ Norman OK , 73019 USA}
\email{kkornelson@ou.edu}
\thanks{
The first author was supported by NSF grant DMS-1400588, and the second author was supported by grant 353293 from the Simon's foundation.}
\thanks{2010 \textit{Mathematics Subject Classification}: 46B20 and 42C15}
\begin{document}

\begin{abstract}
We define the frame potential for a Schauder frame on a finite dimensional Banach space as the square of the $2$-summing norm of the frame operator.  As is the case for frames for Hilbert spaces, we prove that the frame potential can be used to characterize finite unit norm tight frames (FUNTFs) for finite dimensional Banach spaces.  We prove the existence of FUNTFs for a variety of spaces, and in particular that every $n$-dimensional complex Banach space with a $1$-unconditional basis has a FUNTF of $N$ vectors for every $N\geq n$.  However, many interesting results on FUNTFs and sums of rank one projections for Hilbert spaces remain unknown for Banach spaces and we conclude the paper with multiple open questions.  
\end{abstract}
\maketitle

\section{Introduction}

A collection of vectors $(x_j)_{j\in I}$ in a Hilbert space $H$ is called a {\em frame} if there exist constants $A,B>0$, called the {\em frame bounds}, such that 
\begin{equation}\label{E:def_frame}
A\|x\|^2\leq \sum_{j\in I} |\langle x,x_j\rangle|^2\leq B\|x\|^2\quad\quad\textrm{ for all }x\in H.
\end{equation}
The frame is called \emph{tight} if the lower frame bound $A$ equals the upper frame bound $B$.  Inequality \eqref{E:def_frame} can be interpreted as stating that the map $\Theta:H\rightarrow \ell_2(I)$ given by $\Theta(x)=(\langle x,x_j\rangle)_{j\in I}$  is an isomorphic embedding.   This operator allows us to define a collection of vectors $(f_j)_{j\in I}$ by $f_j=(\Theta^*\Theta)^{-1/2}x_j$ for all $j\in I$.  The sequence $(f_j)_{j\in I}$ is itself a frame for $H$,  called the {\em canonical dual frame} of $(x_j)_{j\in I}$, and it gives the following reconstruction formula.
\begin{equation}\label{E:def_frame2}
x = \sum_{j\in I} \langle x,f_j\rangle x_j \quad\quad\textrm{ for all }x\in H.
\end{equation}
Thus, a frame allows for a continuous linear reconstruction formula for all vectors in a Hilbert space.  We think of frames as possibly redundant coordinate systems  in the sense that the vectors $(f_j)_{j\in J}$ used for reconstruction using a frame may not be unique.  This redundancy can be useful in applications as if some coefficients of a basis are lost, this results in the loss of entire dimensions, but if some frame coefficients are lost, the error can be distributed over the whole space.  This concept motivates understanding which frames are most resilient to certain kinds of error \cite{Holmes-Paulsen,GKK}.
A {\em finite unit norm tight frame (FUNTF)}, is a tight frame of unit vectors for a finite dimensional Hilbert space.  FUNTFs are of particular interest, as they minimize both the mean squared error due to noise and the reconstruction error due to the loss of a single coefficient \cite{GKK}.   All orthonormal bases for a Hilbert space are equivalent, but there is a wide variety of different FUNTFs and their geometry carries multiple interesting properties.  

Significant interest in FUNTFs started in 2003 when Benedetto and Fickus proved that for every $k\geq n$, every $n$-dimensional Hilbert space has a FUNTF of $k$ vectors \cite{BF03}.  To do this they defined a positive function on the set $\{(x_j)_{j=1}^k:x_j\in H, \|x_j\|=1\}$, called the {\em frame potential}, so that a collection of vectors minimizes the frame potential if and only if the vectors form a tight frame for $H$.  Since then, researchers have been interested in understanding the geometry and topology of FUNTFs \cite{BH15}\cite{CMS17} and properties of the frame potential itself \cite{FJKO05}\cite{JO08}.  In \cite{DFKLOW}, an algorithm is given to create FUNTFs and in \cite{CF09} it is shown that the method of gradient descent on the frame potential can be used to create FUNTFs.    

 Frames have been generalized to Banach space in many similar but distinct ways, such as atomic decompositions \cite{FG88}, Banach frames \cite{G91}, framings \cite{CHL99}, and Schauder frames \cite{CDOSZ08}.  We will be specifically considering Schauder frames as they are a direct generalization of \eqref{E:def_frame2} without any additional assumptions. Given a Banach space $X$ with dual $X^*$, a \emph{Schauder frame} $(x_j,f_j)_{j=1}^n\subseteq X\times X^*$ (where $n\in\N$ or $n=\infty$) is a sequence of pairs such that we have the following reconstruction formula
\begin{equation}\label{E:def_Sframe}
x = \sum_{j=1}^n f_j(x) x_j \quad\quad\textrm{ for all }x\in X.
\end{equation}
If $(x_j)_{j=1}^\infty$ is a Schauder basis for a Banach space $X$ with biorthogonal functionals $(x_j^*)_{j=1}^\infty$ then $x=\sum_{j=1}^\infty x_j^*(x) x_j$ for all $x\in X$.  Thus, $(x_j,x_j^*)_{j=1}^\infty$ is a Schauder frame.  This gives that Schauder frames are both a generalization of frames for Hilbert spaces and Schauder bases for Banach spaces.  It is interesting to consider both what properties of frames for Hilbert spaces extend to Schauder frames, and what properties of Schauder bases extend to Schauder frames.  For example, frames for Hilbert spaces can be characterized as projections of Riesz bases for Hilbert spaces \cite{HL00}, and Schauder frames can be characterized as projections of Schauder bases \cite{CDOSZ08}.  Furthermore, many of the fundamental characterizations for shrinking and boundedly complete Schauder bases extend nicely to shrinking and boundedly complete Schauder frames \cite{CL09,Liu10,CLS11,BFL15}. Our goal is to extend the well-developed theory and applications for finite unit norm tight frames and the frame potential to the general finite dimensional Banach space setting.  In particular, we will prove that $(x_j,f_j)_{j=1}^N$ is a FUNTF for an $n$-dimensional Banach space if and only if the 2-summing norm of the frame operator of $(x_j,f_j)_{j=1}^N$ is $\frac{N}{\sqrt{n}}$.  
  Though FUNTFs are relatively new mathematical objects, we show that their generalization to Banach spaces is directly connected to some classical notions in the geometry of Banach spaces.

\section{Preliminaries}

Let $X$ be a finite dimensional Banach space with dual $X^*$.  Given a sequence of pairs $(x_j,f_j)_{j=1}^N\subseteq X\times X^*$, the {\em frame operator} of $(x_j,f_j)_{j=1}^N$ is the map $S:X\rightarrow X$ defined by $S(x)=\sum_{j=1}^N f_j(x) x_j$ \cite{FOSZ14}.  The sequence $(x_j,f_j)_{j=1}^N$ is called an {\em approximate Schauder frame} if the frame operator is bounded and invertible \cite{FOSZ14}.  Note that a Schauder frame is an approximate Schauder frame whose frame operator is the identity. 
\begin{definition}
Let $X$ be a finite dimensional Banach space and $(x_j,x^*_j)_{j=1}^N\subseteq X\times X^*$ such that $\|x_j\|=x^*_j(x_j)=\|x_j\|=1$ for all $1\leq j\leq N$.  We say that $(x_j,x^*_j)_{j=1}^N$ is a {\em finite unit norm tight frame} (FUNTF) for $X$ if its frame operator is a scalar multiple of the identity.
\end{definition}
 Note that in the case that $X$ is a Hilbert space, $(x_j,f_j)_{j=1}^N$ is a FUNTF in our definition if and only if $x_j=f_j$ for all $1\leq j\leq N$ and $(x_j)_{j=1}^N$ is a FUNTF in the original Hilbert space definition.   In the case that $X$ is an $N$-dimensional Banach space, then $(x_j,x^*_j)_{j=1}^N$ is a FUNTF for $X$ if and only if $(x_j)_{j=1}^N$ is an Auerbach basis for $X$ with biorthogonal functionals $(x_j^*)_{j=1}^N$.

Given a frame $(x_j)_{j=1}^N$ for a finite dimensional Hilbert space, the {\em Frame Potential} of $(x_j)_{j=1}^N$ is the value
\begin{equation}\label{E:HilbertFP}
FP((x_j)_{j=1}^N)=\sum_{i,j=1}^N|\langle x_j, x_i\rangle|^2.
\end{equation}
The frame potential can also be calculated as the square of the Hilbert-Schmidt norm of the frame operator.  The Hilbert-Schmidt norm  is only defined for operators on Hilbert spaces, but the 2-summing norm is defined for operators on any finite dimensional Banach space and the 2-summing norm of an operator on a Hilbert space is equal to its Hilbert-Schmidt norm.  This naturally allows us to extend the definition of frame potential for frames of finite dimensional Hilbert spaces to Schauder frames of finite dimensional Banach spaces.   Given $(x_j, f_j)_{j=1}^N\subseteq X\times X^*$, we define the {\em frame potential} of $(x_j, f_j)_{j=1}^N$ as the square of the 2-summing norm of the frame operator of $(x_j, f_j)_{j=1}^N$.  

Let $(X,\|\cdot\|_X)$ and $(Y,\|\cdot\|_Y)$ be Banach spaces. The {\em 2-summing norm} of an operator $T:X\rightarrow Y$ is the value $\pi_2(T)$ which satisfies:
\begin{equation}\label{E:2sum}
\pi_2(T)^2=\sup\left\{\sum_{j=1}^n\|T x_i\|_Y^2\,:\,\sum_{j=1}^n |f(x_j)|^2\leq \|f\|^2_{X^*} \textrm{ for all } n\in\N, f\in X^*,(x_j)_{j=1}^n\subseteq X\right\}.
\end{equation}

 The following theorem shows that the frame potential can be used to characterize FUNTFs for Hilbert spaces.
\begin{theorem}[\cite{BF03}]\label{T:BFmain}
Let $(x_j)_{j=1}^N$ be a sequence of unit vectors in an $n$-dimensional Hilbert space with $n\leq N$.  Then, the frame potential of $(x_j)_{j=1}^N$ is at least $\frac{N^2}{n}$, and $(x_j)_{j=1}^N$ is a FUNTF if and only if the frame potential of $(x_j)_{j=1}^N$ is equal to $\frac{N^2}{n}$.
\end{theorem}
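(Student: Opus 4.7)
The plan is to relate the frame potential directly to the spectrum of the frame operator $S = \sum_{j=1}^N x_j \otimes x_j^*$ (the usual Hilbertian frame operator, where $x_j^* = \langle \cdot, x_j\rangle$) and then apply the Cauchy--Schwarz inequality to its eigenvalues. Since $S$ is positive and self-adjoint on an $n$-dimensional Hilbert space, the spectral theorem gives eigenvalues $\lambda_1, \ldots, \lambda_n \geq 0$ (counted with multiplicity).

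First I would observe the identity $\sum_{i,j=1}^N |\langle x_j, x_i\rangle|^2 = \tr(S^2) = \sum_{k=1}^n \lambda_k^2$. This can be verified either by expanding $\tr(S^2)$ directly in terms of the vectors $x_j$, or by recognizing that the frame potential equals the squared Hilbert--Schmidt norm of $S$ (as mentioned in the preamble). Next I would compute $\tr(S) = \sum_{j=1}^N \|x_j\|^2 = N$, using the unit-norm hypothesis, which yields $\sum_{k=1}^n \lambda_k = N$.

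With these two identities in place, Cauchy--Schwarz applied to the vector $(\lambda_1, \ldots, \lambda_n)$ against the all-ones vector gives
\begin{equation*}
N^2 = \Bigl(\sum_{k=1}^n \lambda_k\Bigr)^{2} \leq n \sum_{k=1}^n \lambda_k^2 = n \cdot FP\bigl((x_j)_{j=1}^N\bigr),
\end{equation*}
so the frame potential is at least $N^2/n$. The equality case in Cauchy--Schwarz forces $\lambda_1 = \cdots = \lambda_n = N/n$, i.e., $S = (N/n) I$, which is exactly the FUNTF condition. Conversely, if $S$ is a scalar multiple of the identity, taking traces shows the scalar must be $N/n$ and the frame potential equals $N^2/n$.

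The proof is essentially a one-line application of Cauchy--Schwarz once the frame potential is identified with $\tr(S^2)$, so there is no real obstacle. The only minor subtlety is verifying the combinatorial identity $\sum_{i,j}|\langle x_j, x_i\rangle|^2 = \tr(S^2)$ carefully; this falls out by writing $S^2 = \sum_{i,j} \langle \cdot, x_i\rangle \langle x_i, x_j\rangle x_j$ and taking the trace against any orthonormal basis (or by noting that $S = \Theta^*\Theta$ where $\Theta$ is the analysis operator, so $\tr(S^2) = \tr((\Theta\Theta^*)^2) = \|\Theta\Theta^*\|_{HS}^2$ and $(\Theta\Theta^*)_{ij} = \langle x_j, x_i\rangle$).
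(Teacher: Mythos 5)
Your proof is correct: the identities $\tr(S) = N$ and $\tr(S^2) = \sum_{i,j}|\langle x_i,x_j\rangle|^2$ are right, and Cauchy--Schwarz on the eigenvalues, with its equality case, does exactly what you claim. This is essentially the original Benedetto--Fickus argument, and the paper itself only cites this statement from \cite{BF03}; what the paper actually proves is the Banach-space generalization (Theorem \ref{thm-pi2-as-frame-potential}), by a route that deliberately avoids your main tool. Your argument leans on the spectral theorem: $S$ is positive self-adjoint, so it diagonalizes, the frame potential becomes $\sum_k \lambda_k^2$, and ``all eigenvalues equal'' is equivalent to ``scalar multiple of the identity.'' None of that survives in a general finite-dimensional Banach space, where the frame operator need not be diagonalizable and there is no inner product. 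The paper instead writes the lower bound as a trace-duality estimate, $\pi_2(S) \ge \tr(n^{-1/2} I_X \circ S) = n^{-1/2} N$, using that $n^{-1/2} I_X$ has $2$-summing norm one and hence acts as a norm-one functional on $\Pi_2(X)$, and then handles the equality case by proving that $\Pi_2(X)$ is smooth at $I_X$ (Theorem \ref{T:smooth}, via Pietsch factorization). In the Hilbert-space setting the two approaches collapse into one another --- trace duality against $n^{-1/2}I$ is precisely your Cauchy--Schwarz inequality $\sum_k \lambda_k \le \sqrt{n}\,(\sum_k \lambda_k^2)^{1/2}$ read in the Hilbert--Schmidt inner product --- so your proof buys concreteness and brevity for the Hilbert case, while the paper's formulation is the one that generalizes.
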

One of our main results is proving that Theorem \ref{T:BFmain} is true for Schauder frames as well.  The following result follows from Theorem \ref{thm-pi2-as-frame-potential} which we prove in Section \ref{S:2}.

\begin{theorem}\label{T:main}
Let $X$ be an $n$-dimensional Banach space, $N \ge n$,  $x_1, \dotsc, x_N  \in X$ and $x^*_1, \dotsc x^*_N \in X^*$ such that $\|x_j\|_X=\|x_j^*\|_{X^*}=x^*_j(x_j) = 1$ for $1 \le j \le N$.  Then, the frame potential of $(x_j,x_j^*)_{j=1}^N$ is at least $\frac{N^2}{n}$, and $(x_j,x_j^*)_{j=1}^N$ is a FUNTF if and only if the frame potential of $(x_j)_{j=1}^N$ is equal to $\frac{N^2}{n}$.
\end{theorem}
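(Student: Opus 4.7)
The plan is to reduce the theorem to a trace identity and a single operator-theoretic inequality. Since $x_j^*(x_j)=1$ for each $j$, the trace of the frame operator $S\colon X\to X$ is obtained by a direct basis computation as
\[
\tr(S)=\sum_{j=1}^N x_j^*(x_j)=N.
\]

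The heart of the argument is the inequality
\[
|\tr(T)|\leq \sqrt{n}\,\pi_2(T)
\]
valid for every operator $T\colon X\to X$ on an $n$-dimensional Banach space, with equality if and only if $T$ is a scalar multiple of $I_X$. Applying this with $T=S$ gives the lower bound $\pi_2(S)^2\geq N^2/n$ on the frame potential immediately. For the equality direction, if the frame potential equals $N^2/n$ then the rigidity in the inequality forces $S=cI_X$; taking traces yields $c=N/n$, which is exactly the FUNTF condition. Conversely, in the FUNTF case $S=(N/n)I_X$, so $\pi_2(S)=(N/n)\,\pi_2(I_X)=N/\sqrt{n}$ by the classical Garling--Gordon identity $\pi_2(I_X)=\sqrt{n}$ valid in every $n$-dimensional Banach space, giving frame potential $N^2/n$.

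The main obstacle is proving the trace inequality and its equality characterization. The inequality itself can be derived via trace duality: invoking that the norm dual to $\pi_2$ in the trace pairing, evaluated on $I_X$, also equals $\sqrt{n}$ (a consequence of Lewis's theorem on the existence of an optimal factorization of the identity through $\ell_2^n$), one obtains $|\tr(T)|=|\tr(T\cdot I_X)|\leq \pi_2(T)\,\pi_2^*(I_X)=\sqrt{n}\,\pi_2(T)$. Equivalently, one can factor $T$ through the Hilbert space $(X,\|\cdot\|_\mu)$ coming from a Pietsch measure $\mu$ for $T$ and apply the Cauchy--Schwarz inequality $|\tr_H(A)|\leq \sqrt{n}\,\|A\|_{HS}$ to the resulting Hilbert-to-Hilbert operator. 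The equality case is the subtlest part: saturating the inequality forces extremal behavior both in Pietsch's factorization and in the Cauchy--Schwarz step, from which one extracts that $T$ must act as a scalar on the Pietsch Hilbert space. Verifying that this scalar behavior lifts back to $T=cI_X$ in the original norm of $X$ is the most delicate step, and requires that the Pietsch measure's support be rich enough to recover the original norm topology.
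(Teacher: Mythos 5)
Your reduction is exactly the paper's: the lower bound comes from trace duality together with $\pi_2(I_X)=\sqrt{n}$ (the paper writes $\pi_2(S)\geq \tr(n^{-1/2}I_X\,S)=n^{-1/2}\tr(S)=n^{-1/2}N$, which is your inequality $|\tr(T)|\leq\sqrt{n}\,\pi_2(T)$ applied to $T=S$), and the whole theorem then rests on the rigidity statement that equality forces $T$ to be a scalar multiple of $I_X$. That rigidity statement is precisely the paper's Theorem 3.3, namely that $\Pi_2(X,X)$ is smooth at $I_X$ with unique norming functional $n^{-1/2}I_X$. So the architecture is right, and the trace bookkeeping ($\tr(S)=N$, hence $c=N/n$, and conversely $\pi_2((N/n)I_X)=N/\sqrt{n}$) is correct.

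The genuine gap is that you do not prove the equality case, and the difficulty you flag at the end is not the one that actually has to be overcome; in particular, nothing about ``the Pietsch measure's support being rich enough to recover the original norm topology'' is needed or used. Here is how the paper closes it. Normalize $\pi_2(T)=1$ and factor $T=B\,I_{\infty,2}\,A$ through $L_\infty(M)\to L_2(M)$ with $\|A\|=\|B\|=1$. Put $H=(\ker B)^\perp\subseteq L_2(M)$, so $\dim H\leq n$ and $T=B|_H\,P_H\,I_{\infty,2}\,A$. The chain
\[
\sqrt{n}=\tr(T)=\tr\bigl(I_H\,P_H I_{\infty,2}A\,B|_H\bigr)\leq \pi_2(I_H)\,\pi_2\bigl(P_H I_{\infty,2}A B|_H\bigr)\leq \sqrt{n}
\]
forces every step to be an equality. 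In particular $\pi_2(I_H)=\sqrt{n}$, which forces $\dim H=n$ and hence $B|_H:H\to X$ to be invertible; and equality in the trace-duality step on the $n$-dimensional \emph{Hilbert} space $H$, where $\pi_2$ is the Hilbert--Schmidt norm and is smooth, forces $P_H I_{\infty,2}A B|_H=n^{-1/2}I_H$. The ``lift back'' is then a one-line similarity: $T=B|_H\bigl(P_H I_{\infty,2}A B|_H\bigr)(B|_H)^{-1}=n^{-1/2}I_X$, since an operator similar to a scalar multiple of the identity equals that scalar multiple. Without establishing that the extremal factorization has full rank $n$ (so that $B|_H$ is invertible and the Hilbert-space equality case applies in dimension exactly $n$), your Cauchy--Schwarz step cannot even saturate, so this is the point your sketch must be expanded to address.
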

It can be difficult to calculate the frame potential using the 2-summing norm, but there is fortunately extensive literature on the subject.  Before getting into properties of the 2-summing norm and our definition of the frame potential, we prove that the natural alternatives for an explicit simple frame potential on Banach spaces don't work.
 Our definition for a frame potential uses a generalization of the Hilbert-Schmidt norm, but it is conceivable that the explicit formula  given in \eqref{E:HilbertFP} could be generalized instead.  Given an approximate Schauder frame $(x_j,x_j^*)_{j=1}^N\subseteq X^*\times X$, the two natural candidates for an alternative  formula for a Schauder frame potential are the values 
$\sum_{j=1}^N\sum_{i=1}^N |x^*_j(x_i)|^2$ and $\sum_{j=1}^N\sum_{i=1}^N |x^*_j(x_i) x^*_i(x_j)| $.  However, neither of these are appropriate formulas for a frame potential because as the following proposition shows, Theorem \ref{T:main} would be false for both formulas.  Thus, although the 2-summing norm can be difficult to calculate, it works for defining a frame potential and the simple alternative formulas do not. 

\begin{proposition}\label{P:frameFail}
Let $F_3(\ell_1^2)$ be the set of length 3 approximate Schauder frames $(x_j,x_j^*)_{j=1}^3$ of $\ell_1^2$ such that $\|x_j\|_1=\|x_j^*\|_\infty=x^*_j(x_j)=1$ for $j=1,2,3$.  
There exists a FUNTF $(x_j,x_j^*)_{j=1}^3\in F_3(\ell_1^2)$ and  $(y_j,y_j^*)_{j=1}^3,(z_j,z_j^*)_{j=1}^3\in F_3(\ell_1^2)$  so that $(y_j,y^*_j)_{j=1}^3$ and $(z_j,z_j^*)_{j=1}^3$ are not FUNTFs, and the following two inequalities are satisfied.
\begin{enumerate}
\item $\sum_{j=1}^3 \sum_{k=1}^3 |x^*_j(x_k)|^2 > \sum_{j=1}^3 \sum_{k=1}^3 |y^*_j(y_k)|^2$
\item $\sum_{j=1}^3 \sum_{k=1}^3 |x^*_j(x_k) x^*_k(x_j)| > \sum_{j=1}^3 \sum_{k=1}^3 |z^*_j(z_k) z^*_k(z_j)| $
\end{enumerate}
\end{proposition}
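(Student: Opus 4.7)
The plan is to prove Proposition~\ref{P:frameFail} by constructing explicit examples and verifying inequalities (1) and (2) by elementary computation. For the FUNTF, I would try $x_1 = e_1$, $x_2 = e_2$, and seek a third unit vector $x_3$ so that the rank-one pieces $x_j \otimes x_j^*$ sum to $(3/2) I$. Solving the resulting equations forces (up to symmetry) $x_3 = (1/2, -1/2)$ with $x_1^* = (1, 1/2)$, $x_2^* = (1/2, 1)$, $x_3^* = (1, -1)$; a direct $2 \times 2$ matrix check confirms the frame operator equals $(3/2) I$. Expanding the sums yields $\sum_{j,k} |x_j^*(x_k)|^2 = 45/8$, and because each off-diagonal product $x_j^*(x_k) x_k^*(x_j)$ equals $1/4$, also $\sum_{j,k} |x_j^*(x_k) x_k^*(x_j)| = 9/2$.

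For (1) I would take $y$ to be the redundant system $y_1 = y_3 = e_1$ with dual $e_1$ and $y_2 = e_2$ with dual $e_2$; the frame operator is the diagonal matrix with entries $2$ and $1$, so $y$ is an approximate Schauder frame and not a FUNTF. Its Gram matrix has many zero entries, and a short calculation gives $\sum_{j,k} |y_j^*(y_k)|^2 = 5 < 45/8$.

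Inequality (2) is the heart of the proof. The preliminary observation is that if $T_{jk} = w_j^*(w_k)$ and $S$ is the frame operator for any $(w_j, w_j^*) \in F_3(\ell_1^2)$, then $\sum_{j,k} T_{jk} T_{kj} = \tr(S^2)$; combined with $\tr(S) = 3$, this forces $\tr(S^2) \ge 9/2$ whenever the eigenvalues of $S$ are real, and so $\sum_{j,k} |T_{jk} T_{kj}| \ge 9/2$ in that regime. Thus no real-eigenvalue frame can beat the FUNTF value $9/2$, and one must instead work with a frame whose $S$ has complex eigenvalues (i.e., $\det(S) > 9/4$), while simultaneously arranging enough vanishing entries of $T$ to drop the sum of absolute values below $9/2$. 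The choice $z_1 = e_1$, $z_1^* = (1, 0)$, $z_2 = e_2$, $z_2^* = (-1, 1)$, $z_3 = (1/2, 1/2)$, $z_3^* = (1, 1)$ meets both demands: a $2 \times 2$ computation gives $\det(S) = 5/2$ and eigenvalues $(3 \pm i)/2$, so $z$ is a non-FUNTF, and the duals are designed so that $z_1^*(z_2) = 0$ and $z_2^*(z_3) = 0$. Only the pair $(1, 3)$ then contributes an off-diagonal term, with $z_1^*(z_3) z_3^*(z_1) = 1/2$, yielding $\sum_{j,k} |z_j^*(z_k) z_k^*(z_j)| = 3 + 2 \cdot (1/2) = 4 < 9/2$.

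The main obstacle is finding the witness $z$ for (2): the trace identity forces one out of the real-eigenvalue regime entirely, and within the complex-eigenvalue regime one must carefully balance the $\ell_1^2$ unit-norm and biorthogonality constraints against both invertibility of $S$ and the vanishing of enough entries of $T$ to push $\sum |T_{jk} T_{kj}|$ below $9/2$.
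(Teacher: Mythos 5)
Your proposal is correct and takes essentially the same route as the paper: exhibit an explicit FUNTF and explicit non-FUNTF witnesses in $F_3(\ell_1^2)$ and verify the two inequalities by direct computation (your witnesses differ slightly from the paper's but yield the same key values $45/8$ and $9/2$ for the FUNTF, and $5$ and $4$ for the counterexamples, all of which I checked). The only addition beyond the paper is your trace identity $\sum_{j,k}T_{jk}T_{kj}=\tr(S^2)$ explaining why the witness for (2) must have a frame operator with non-real eigenvalues; this is a nice motivating observation but not needed for the verification itself.
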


\begin{proof}
Let $(e_1,e_2)$ be the unit vector basis for $\ell_1^2$ with biorthogonal functionals $(e_1^*,e_2^*)$.    We will define a FUNTF $(x_j,x^*_j)_{j=1}^3$ for $\ell_1^2$ by 
 $$x_1=e_1,\, x_2=\frac{1}{4}e_1-\frac{3}{4}e_2,\,  x_3=\frac{1}{4}e_1+\frac{3}{4}e_2,\quad  x^*_1=e^*_1,\, x^*_2=e^*_1-e^*_2,\,  x^*_3=e^*_1+e^*_2.$$ 
Then we have that $\|x_j\|_{1}=\|x^*_j\|_\infty=x_j^*(x_j)=1$ for all $j=1,2,3$.  Furthermore, it is simple to check that for all $x\in \ell_1^2$, we have that $\sum_{j=1}^3 x^*_j(x) x_j=\frac{3}{2} x$.  Thus, $(x_j,x^*_j)_{j=1}^3$ is a FUNTF for $\ell_1^2$.  We now define $(y_j,y^*_j)_{j=1}^3$
 by 
 $$y_1=e_1,\, y_2=\frac{1}{2}e_1-\frac{1}{2}e_2,\,  y_3=\frac{1}{2}e_1+\frac{1}{2}e_2,\quad  y^*_1=e^*_1,\, y^*_2=e^*_1-e^*_2,\,  y^*_3=e^*_1+e^*_2.$$ 
Then we have that $\|y_j\|_{1}=\|y^*_j\|_\infty=y_j^*(y_j)=1$ for all $j=1,2,3$.  However, $\sum_{j=1}^3 y^*_j(e_1) y_j= 2 e_1$ and $\sum_{j=1}^3 y^*_j(e_2) y_j= e_2$.  Thus, $(y_j,y^*_j)_{j=1}^3$ is not a FUNTF.  

A direct calculation shows that,
 $$\sum_{j=1}^3 \sum_{k=1}^3 |x^*_j(x_k)|^2= 5+\frac{5}{8}> 5+\frac{1}{2}= \sum_{j=1}^3 \sum_{k=1}^3 |y^*_j(y_k)|^2.$$
Which proves (1).  We now define $(z_j,z^*_j)_{j=1}^3$
 by 
 $$z_1=e_1,\, z_2=e_2,\,  z_3=\frac{1}{2}e_1+\frac{1}{2}e_2,\quad  z^*_1=e^*_1-e^*_2,\, z^*_2=e^*_2,\,  z^*_3=e^*_1+e^*_2.$$ 
Then we have that $\|z_j\|_{1}=\|z^*_j\|_\infty=z_j^*(z_j)=1$ for all $j=1,2,3$.  However, $\sum_{j=1}^3 z^*_j(e_1) z_j=\frac{3}{2} e_1+\frac{1}{2}e_2$.  Thus, $(z_j,z^*_j)_{j=1}^3$ is not a FUNTF.

A direct calculation proves (2) as, 
$$\sum_{j=1}^3 \sum_{k=1}^3 |x^*_j(x_k)x^*_k(x_j)|= 4+\frac{1}{2}> 4= \sum_{j=1}^3 \sum_{k=1}^3 |z^*_j(z_k)z^*_k(z_j)|.$$
\end{proof}

\section{Properties of the $2$-summing norm and the frame potential}\label{S:2}

Recall that if $(X,\|\cdot\|_X)$ and $(Y,\|\cdot\|_Y)$ are Banach spaces then the {\em 2-summing norm} of an operator $T:X\rightarrow Y$ is the value $\pi_2(T)$ which satisfies:
\begin{equation}\label{E:2sum2}
\pi_2(T)^2=\sup\left\{\sum_{j=1}^n\|T x_i\|_Y^2\,:\,\sum_{j=1}^n |f(x_j)|^2\leq \|f\|^2_{X^*} \textrm{ for all } n\in\N, f\in X^*,(x_j)_{j=1}^n\subseteq X\right\}.
\end{equation}
The space of all $2$-summing operators from $X$ to $Y$, equipped with the norm $\pi_2(\cdot)$, will be denoted by $\Pi_2(X,Y)$ (or simply $\Pi_2(X)$ when $Y=X$).
Equation \eqref{E:2sum2} may look daunting, but the literature contains many useful techniques and results to assist us.
Even though \eqref{E:2sum2} requires us to consider sequences of any arbitrary length $n$, when the operator $T$ has rank $k$ it suffices to consider $n=k(k+1)/2$ in the real case and $n=k^2$ in the complex case \cite[Thm. 18.2]{Tomczak}.
One result that we will use extensively is that 
the space of $2$-summing operators from a Banach space $X$ to itself is in trace duality with itself \cite[Prop. 9.10]{Tomczak}.  In particular, $f$ is a norm $1$ linear functional defined on the space of $2$-summing operators from $X$ to $X$ if and only if there is an operator $S:X\rightarrow X$ with $\pi_2(S)=1$ and $f(T)=\tr(ST)$ for every $2$-summing operator $T:X\rightarrow X$.
Another important ingredient will be the fact that if $X$ is an $n$-dimensional normed space, then the $2$-summing norm of the identity map $I_X : X \to X$ is equal to $\sqrt{n}$ \cite[Thm. 4.17]{Diestel-Jarchow-Tonge}.
We will also make use of the following characterization of $2$-summing operators (see \cite[Cor. 16.10.1]{G07} or \cite[Cor. 2.16]{Diestel-Jarchow-Tonge} for a reference).

\begin{theorem}[Pietsch factorization theorem]
Let $X$ and $Y$ be Banach spaces and $T:X\rightarrow Y$ be a linear operator.  Then $T$ is a $2$-summing operator if and only if there is a probability measure space $M$ and linear operators $A:X\rightarrow L_\infty(M)$ and $B:L_2(M)\rightarrow Y$ with $\|A\|\|B\|=\pi_2(T)$ so that the following diagram commutes, where $I_{\infty,2}$ is the formal identity from $L_\infty(M)$ to $L_2(M)$.
$$
\xymatrix{
 L_\infty(M) \ar[r]_{I_{\infty,2}}& L_2(M) \ar[d]^B \\
X \ar[u]^A \ar[r]_S &X
}
$$
\end{theorem}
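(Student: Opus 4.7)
The plan is to prove both directions separately, with the easy direction first and then the harder one via Pietsch domination.

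For sufficiency, suppose $T = B \circ I_{\infty,2} \circ A$ for some probability measure $\mu$ on a compact space $M$. I would invoke the facts that the $2$-summing operators form a Banach operator ideal and that the formal identity $I_{\infty,2} : L_\infty(M) \to L_2(M)$ is $2$-summing with $\pi_2(I_{\infty,2}) = 1$ (indeed this is an easy direct computation since $\|I_{\infty,2} g\|_2^2 = \int |g|^2\, d\mu$ for $g \in L_\infty(M)$, and the $2$-summing sup condition reduces to an integral of $\sup_j |g_j(t)|^2$). Then the ideal property yields $\pi_2(T) \le \|B\|\,\pi_2(I_{\infty,2})\,\|A\| = \|A\|\|B\|$.

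For necessity, the core is the Pietsch domination inequality: there exists a probability measure $\mu$ on the compact set $K = (B_{X^*}, w^*)$ such that
\begin{equation*}
\|Tx\|_Y \le \pi_2(T)\Bigl(\int_K |f(x)|^2\, d\mu(f)\Bigr)^{1/2} \quad \text{for all } x \in X.
\end{equation*}
I would prove this by a Hahn--Banach separation argument inside $C(K)$. Consider the convex cone $\mathcal{C}$ of continuous functions on $K$ of the form
\begin{equation*}
\phi_{(x_j)}(f) = \pi_2(T)^2 \sum_{j=1}^n |f(x_j)|^2 - \sum_{j=1}^n \|Tx_j\|_Y^2,
\end{equation*}
ranging over finite families $(x_j) \subset X$. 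By the very definition of $\pi_2(T)$, each such $\phi$ satisfies $\sup_{f \in K} \phi(f) \ge 0$, so $\mathcal{C}$ is disjoint from the open convex set $\{g \in C(K) : \max_K g < 0\}$. Separating these by a real linear functional on $C(K)$ and invoking the Riesz representation theorem produces, after normalization, a probability measure $\mu$ on $K$ with the desired inequality.

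Given domination, the factorization is constructed canonically. Let $M = K$ with measure $\mu$, define $A : X \to L_\infty(M)$ by $(Ax)(f) = f(x)$ (so $\|A\| = 1$), and note that $\|I_{\infty,2}Ax\|_2 = \bigl(\int_K |f(x)|^2\,d\mu\bigr)^{1/2}$. The domination inequality then says that the linear map $I_{\infty,2}Ax \mapsto Tx$ on the subspace $I_{\infty,2}A(X) \subset L_2(M)$ is well defined and has norm at most $\pi_2(T)$; extending it by continuity to the closure and then by Hahn--Banach (or by orthogonal projection, since we are in a Hilbert space) to all of $L_2(M)$ yields $B$ with $\|B\| \le \pi_2(T)$. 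Combining with the sufficiency bound already established forces $\|A\|\|B\| = \pi_2(T)$.

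The main obstacle is the Hahn--Banach separation step: one must verify that $\mathcal{C}$ really is a convex cone (closure under addition follows from concatenating the sequences $(x_j)$, and under positive scaling by rescaling the entries) and that it is disjoint from $\{g : \max g < 0\}$, which is precisely the content of the supremum definition of $\pi_2(T)$ in \eqref{E:2sum2}. Everything else is either a routine ideal-norm manipulation or the standard construction of the factoring operators, but care is needed to handle the complex case (separating the real parts and using that the cone is stable under multiplication by unimodular scalars to absorb the imaginary part).
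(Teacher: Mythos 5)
The paper does not prove this statement: the Pietsch factorization theorem is quoted as a classical result with pointers to \cite[Cor. 16.10.1]{G07} and \cite[Cor. 2.16]{Diestel-Jarchow-Tonge}, so there is no in-paper argument to compare against. Your proposal is precisely the standard proof from those references --- Pietsch domination obtained by Hahn--Banach separation in $C(K)$ for $K=(B_{X^*},w^*)$, followed by the canonical factorization $X\to C(K)\subseteq L_\infty(\mu)\to L_2(\mu)\to Y$ --- and it is essentially correct, including the verification that $\mathcal{C}$ is a convex cone disjoint from $\{g:\max_K g<0\}$ and the extension of $B$ from the closure of $I_{\infty,2}A(X)$ by orthogonal projection. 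Two small refinements: your worry about the complex case at the end is unnecessary, since the cone $\mathcal{C}$ consists of real-valued functions from the outset (both $|f(x_j)|^2$ and $\|Tx_j\|_Y^2$ are real), so the separation takes place in real $C(K)$ with no modification; and in the sufficiency direction the computation of $\pi_2(I_{\infty,2})=1$ rests on the almost-everywhere bound $\sum_j|g_j(t)|^2\le\sup_{\|f\|\le 1}\sum_j|f(g_j)|^2$ (via point evaluations), after which one integrates $\sum_j|g_j(t)|^2$ --- your phrase ``an integral of $\sup_j|g_j(t)|^2$'' misstates this step, though the intended estimate is clear. Note also that the diagram in the statement labels the bottom arrow $S:X\to X$ where it should read $T:X\to Y$; your argument correctly handles the general target space.
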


Recall that a Banach space $X$ is smooth at a point $x\in X \setminus\{ 0\}$ if  there exists a unique normalizing functional of $x$ which we call $x^*$. That is, $x^* \in X^*$ with $\n{x^*} =1$ and $x^*(x) = \n{x}$.  A Banach space is called smooth if it is smooth at every non-zero point.
More generally, for a smooth Banach space we define the duality map $J : X \to X^*$ as follows: for any $x \in X$, $Jx$ is the unique functional in $X^*$ satisfying $\n{Jx} = \n{x}$ and $(Jx)(x) = \n{x}^2$.   The space of Hilbert-Schmidt operators on a Hilbert space is itself a Hilbert space, and is hence smooth.  Though the space of $2$-summing operators on a Banach space may not be smooth, we prove below that the space of $2$-summing operators on any finite dimensional Banach space is smooth at the identity map.

\begin{theorem}\label{T:smooth}
Let $X$ be a finite dimensional Banach space.  Then the space of $2$-summing operators on $X$ is smooth at the identity map $I_X$.
\end{theorem}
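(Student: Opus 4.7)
The plan is to use the trace duality between $\Pi_2(X,X)$ and itself, cited earlier in the paper, to reduce smoothness at $I_X$ to the uniqueness of a dual witness. A norm-one linear functional on $\Pi_2(X,X)$ has the form $T\mapsto\tr(ST)$ with $\pi_2(S)=1$, and it supports $I_X$ exactly when $\tr(S)=\pi_2(I_X)=\sqrt{n}$. Since $S_0=\frac{1}{\sqrt{n}}I_X$ clearly satisfies $\pi_2(S_0)=1$ and $\tr(S_0)=\sqrt{n}$, it will suffice to show that every $S$ with $\pi_2(S)=1$ and $\tr(S)=\sqrt{n}$ equals $S_0$.

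Given such an $S$, the next step is to invoke the Pietsch factorization theorem to obtain a probability measure $\mu$ and operators $A\colon X\to L_\infty(\mu)$ and $B\colon L_2(\mu)\to X$ with $\|A\|=\|B\|=1$ and $S=B\,I_{\infty,2}\,A$. Set $R:=I_{\infty,2}AB\colon L_2(\mu)\to L_2(\mu)$. Since $R$ factors through $X$ it has rank at most $n$, and by the cyclic property of the trace (valid because $R$ has finite rank), $\tr(R)=\tr(S)=\sqrt{n}$. The key technical step will be the Hilbert-Schmidt bound
\[
\|R\|_{HS}^{2}=\sum_k\|ABu_k\|_{L_2(\mu)}^{2}=\int\|B^{\ast}g_m\|_{L_2(\mu)}^{2}\,d\mu(m)\le\int\|g_m\|_{X^{\ast}}^{2}\,d\mu(m)\le1,
\]
obtained from Parseval in an orthonormal basis $(u_k)$ of $L_2(\mu)$, where $g_m\in X^{\ast}$ is the point-evaluation functional $x\mapsto(Ax)(m)$ and has norm at most $\|A\|=1$.

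At this point $R$ is a Hilbert space operator with $\rank(R)\le n$, $\|R\|_{HS}\le1$, and $\tr(R)=\sqrt{n}$, and the chain
\[
\sqrt{n}=\tr(R)\le\|R\|_{S_1}\le\sqrt{\rank(R)}\,\|R\|_{HS}\le\sqrt{n}
\]
must be an equality throughout. The first equality will force $R$ to be positive semidefinite; the second (Cauchy-Schwarz on the nonzero singular values) will force those singular values to coincide; the third will force $\rank(R)=n$ and $\|R\|_{HS}=1$, so each nonzero singular value equals $1/\sqrt{n}$. Hence $R=\frac{1}{\sqrt{n}}P$ for the orthogonal projection $P$ of $L_2(\mu)$ onto the range of $R$. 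Since this range sits inside and shares dimension $n$ with the range of $I_{\infty,2}A$, the two coincide, and in particular $I_{\infty,2}A$ is injective. Composing $I_{\infty,2}A$ on the left of $S=BI_{\infty,2}A$ and using that $P$ fixes every vector in the range of $I_{\infty,2}A$ yields $(I_{\infty,2}A)\circ S=\frac{1}{\sqrt{n}}I_{\infty,2}A$, and injectivity forces $S=\frac{1}{\sqrt{n}}I_X$.

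The main obstacle I anticipate is establishing the Hilbert-Schmidt bound $\|R\|_{HS}\le1$: with only $\|R\|\le1$ and $\rank(R)\le n$ one obtains the much weaker bound $\tr(R)\le n$. The sharp bound $\sqrt{n}$ relies on the Parseval computation above, which in turn depends on the Pietsch factorization producing $\|B\|=1$. Once this finer bound is in place, the rigid equality cases in the trace and Schatten-norm inequalities become effective and force the uniqueness.
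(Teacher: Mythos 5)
Your proof is correct and follows essentially the same route as the paper's: reduce smoothness at $I_X$ via trace duality to the uniqueness of an operator $S$ with $\pi_2(S)=1$ and $\tr(S)=\sqrt{n}$, apply the Pietsch factorization, and use Hilbert-space rigidity to force $S=n^{-1/2}I_X$. The only difference is one of execution: the paper restricts to $H=(B^{-1}(0))^{\perp}$ and invokes smoothness of the Hilbert--Schmidt norm at $I_H$, whereas you identify $R=I_{\infty,2}AB=n^{-1/2}P$ directly through the equality cases of $\tr(R)\le\|R\|_{S_1}\le\sqrt{\rank(R)}\,\|R\|_{HS}$ --- the same rigidity made explicit.
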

\begin{proof}
Let $X$ be an $n$-dimensional Banach space. 
Let $S:X\rightarrow X$ with $\pi_2(S)=1$, we have that 
\begin{equation}\label{E:smoothId}
n^{1/2}=\pi_2(I_X)\leq \tr(I_X\circ S)=\tr(S)
\end{equation}
Note that we have equality in \eqref{E:smoothId} for $S=n^{-1/2} I_X$ and hence $n^{-1/2} I_X$ is a norming functional for $I_X$.  We will prove that the space of $2$-summing operators is smooth at the identity operator by showing that equality occurs in \eqref{E:smoothId} if and only if $S=n^{-1/2} I_X$.  Suppose that equality occurs in \eqref{E:smoothId} for an operator $S$ on $X$ with $\pi_2(S)=1$.  In particular, $\tr(S)=n^{1/2}$.  By the Pietsch Factorization Theorem, we have that there exists a probability measure space $M$ and operators $A:X\rightarrow L_\infty(M)$ with $\|A\|=1$ and $B:L_2(M)\rightarrow Y$ with $\|B\|=\pi_2(S)=1$ so that the following diagram commutes.
$$
\xymatrix{
 L_\infty(M) \ar[r]_{I_{\infty,2}}& L_2(M) \ar[d]^B \\
X \ar[u]^A \ar[r]_S &X
}
$$
Let $H=(B^{-1}(0))^\perp$ and $P_H:L_2(M)\rightarrow H$ be the orthogonal projection.  Note that $dim(H)\leq n$ as $X$ is $n$-dimensional and hence $\pi_2(I_H)\leq n^{1/2}$. We now have the following inequality.
\begin{align*}
n^{1/2} &= \tr(S) = \tr(B|_H P_H I_{\infty,2} A) = \tr(P_H I_{\infty,2} A B|_H) = \tr( I_{H} P_H I_{\infty,2} A B|_H) \\
	&\le \pi_2(I_{H}) \pi_2(P_H I_{\infty,2} A B|_H) \le n^{1/2} \|P_H\|\pi_2( I_{\infty,2})\|A\|\|B|_H\| = n^{1/2}
\end{align*}
Therefore, all the inequalities above are in fact equalities. In particular, $\pi_2(I_H)=n^{1/2}$ and hence $B$ is rank $n$ and $B|_H:H\rightarrow X$ is invertible.  Furthermore, 
$$
\tr( I_{H} P_H I_{\infty,2} A B|_H)=\pi_2(I_{H}) \pi_2(P_H I_{\infty,2} A B|_H).
$$
Note that $H$ is an $n$-dimensional Hilbert space and the $2$-summing norm coincides with the Hilbert-Schmidt norm for operators on Hilbert spaces.  Thus, as the Hilbert-Schmidt norm is smooth, we have that $P_H I_{\infty,2} A B|_H=n^{-1/2} I_H$ is the unique normalizing functional of $I_H$.   As $B|_H$ is invertible, we have that $B|_HP_H I_{\infty,2} A B|_H (B|_H)^{-1}= B|_H n^{-1/2} I_H (B|_H)^{-1}$.  Hence, $S=n^{-1/2} I_X$.
\end{proof}

Recall that the frame potential of an approximate Schauder frame is the square of the $2$-summing norm of the frame operator. Thus, the following result shows that the frame potential can be used to characterize FUNTFs.

\begin{theorem}\label{thm-pi2-as-frame-potential}
Let $X$ be an $n$-dimensional Banach space, $N \ge n$,  $x_1, \dotsc, x_N  \in X$ and $x^*_1, \dotsc x^*_N \in X^*$ such that $x^*_j(x_j) = 1$ for $1 \le j \le N$.
Define the operator $S : X \to X$ by
$$
Sx = \sum_{j=1}^N x^*_j(x) x_j.
$$
Then
\begin{equation}\label{eqn-frame-potential-bound}
\frac{N}{\sqrt{n}} \le \pi_2(S).
\end{equation}
Moreover, equality in \eqref{eqn-frame-potential-bound} occurs if and only if $S = \tfrac{N}{n} I_X$.
\end{theorem}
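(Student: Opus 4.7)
The plan is to deduce the theorem from trace duality plus the smoothness result already established in Theorem~\ref{T:smooth}. First, observe that the trace of the frame operator is very easy to compute: since $Sx = \sum_{j=1}^N x_j^*(x)x_j$, we have
\begin{equation*}
\tr(S) = \sum_{j=1}^N x_j^*(x_j) = N.
\end{equation*}
This single identity will do most of the work.

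Next, I would invoke the trace duality between $\Pi_2(X)$ and itself stated in the preliminaries (\cite[Prop.~9.10]{Tomczak}): for any two $2$-summing operators $T,U:X\to X$ one has $|\tr(TU)|\leq \pi_2(T)\pi_2(U)$. Apply this with $T = I_X$ and $U = S$. Using the standard fact $\pi_2(I_X)=\sqrt{n}$ (\cite[Thm.~4.17]{Diestel-Jarchow-Tonge}), we get
\begin{equation*}
N = \tr(S) = \tr(I_X \circ S) \leq \pi_2(I_X)\,\pi_2(S) = \sqrt{n}\,\pi_2(S),
\end{equation*}
which rearranges to the desired inequality $\pi_2(S)\geq N/\sqrt{n}$.

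For the equality case, suppose $\pi_2(S)=N/\sqrt{n}$. Normalize by setting $\widetilde{S} = (\sqrt{n}/N)S$, so that $\pi_2(\widetilde{S})=1$ and $\tr(\widetilde{S}) = \sqrt{n}$. Then in the inequality $\tr(I_X\circ \widetilde S) \leq \pi_2(I_X)\pi_2(\widetilde S)$ we have equality, which means that (viewed as a linear functional via $T\mapsto \tr(T\widetilde{S})$) the operator $\widetilde{S}$ is a norming functional for $I_X$ in $\Pi_2(X)$. By Theorem~\ref{T:smooth}, $\Pi_2(X)$ is smooth at $I_X$, so the norming functional of $I_X$ is unique; since $n^{-1/2}I_X$ is clearly one such norming functional, we must have $\widetilde{S} = n^{-1/2}I_X$. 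Unwinding the normalization gives $S = (N/n)I_X$, as required. Conversely, if $S=(N/n)I_X$ then $\pi_2(S)=(N/n)\pi_2(I_X)=N/\sqrt{n}$.

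The argument is short because the heavy lifting was already done in Theorem~\ref{T:smooth}; the only thing to be careful about is matching up normalizations correctly when transferring the uniqueness statement from the unit sphere of $\Pi_2(X)$ to the operator $S$. No step should offer serious resistance, but the cleanest presentation is probably to state trace duality and $\pi_2(I_X)=\sqrt{n}$ as the two ingredients driving the lower bound, and then reduce the equality case to the uniqueness of the norming functional by simple rescaling.
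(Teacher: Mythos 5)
Your proof is correct and follows essentially the same route as the paper's: the lower bound comes from pairing $S$ against the norm-one trace-duality functional $n^{-1/2}I_X$ together with $\tr(S)=N$, and the equality case is reduced to the uniqueness of the norming functional of $I_X$ guaranteed by Theorem~\ref{T:smooth}. The only cosmetic difference is that you rescale $S$ explicitly before invoking smoothness, whereas the paper states the conclusion directly and recovers the scalar $\lambda = N/n$ by taking traces.
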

\begin{proof}
We have the following inequality,
$$\pi_2(S)\geq \tr(n^{-1/2}I_X S) =n^{-1/2} \tr(\sum_{j=1}^N x^*_j\otimes x_j)= n^{-1/2}\sum_{j=1}^N x_j^*(x_j)=n^{-1/2}N
$$
Furthermore, as the $2$-summing norm is smooth at the identity operator, we have that equality occurs if and only if $S$ is a scalar multiple of the identity operator.  Thus, there exists a scalar $\lambda$ such that $S=\lambda I_X$.  By taking the trace of both sides, we get that $\lambda=\tfrac{N}{n}$.
\end{proof}

\section{When do FUNTFs of length $N$ exist?}\label{S:exists}

In Theorem \ref{thm-pi2-as-frame-potential} we have obtained a nice lower bound of $N^2/n$ for the frame potential, which is achieved only when the associated frame operator is a multiple of the identity. Nevertheless, it does not show that the bound is actually achieved.
More generally, that result does not prove the existence of FUNTFs of a given length $N$ for a general Banach space: this section 
deals with precisely that question. We start by recording the easy fact that FUNTFs always exist when their length is a multiple of the dimension of the space. 

\begin{proposition}
Let $X$ be an $n$-dimensional Banach space. If $N$ is a multiple of $n$, then there exists a FUNTF of length $N$ for $X$.
\end{proposition}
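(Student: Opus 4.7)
The plan is straightforward: invoke the classical theorem of Auerbach that every $n$-dimensional Banach space admits an Auerbach basis, and then repeat it the appropriate number of times. Recall that an Auerbach basis is a basis $(e_j)_{j=1}^n$ of $X$ together with its biorthogonal functionals $(e_j^*)_{j=1}^n \subseteq X^*$ such that $\|e_j\|_X = \|e_j^*\|_{X^*} = 1$ for every $1 \le j \le n$. As the authors already remarked in Section~2, such a basis is itself a FUNTF of length $n$ for $X$: the frame operator $T x = \sum_{j=1}^n e_j^*(x) e_j$ reconstructs every $x \in X$ and is therefore equal to $I_X$, which is trivially a scalar multiple of the identity.

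Now write $N = kn$ with $k \in \mathbb{N}$, and define the length-$N$ sequence $(x_j, x_j^*)_{j=1}^N$ by concatenating $k$ copies of the Auerbach pair $(e_j, e_j^*)_{j=1}^n$, that is, for $0 \le \ell \le k-1$ and $1 \le j \le n$ set $x_{\ell n + j} = e_j$ and $x_{\ell n + j}^* = e_j^*$. Then every vector and every functional in the list has norm one, and $x_j^*(x_j) = 1$ for every $j$, so the normalization requirements of a FUNTF are met. The associated frame operator satisfies
\begin{equation*}
S x = \sum_{j=1}^N x_j^*(x) x_j = \sum_{\ell=0}^{k-1} \sum_{j=1}^n e_j^*(x) e_j = k\, I_X(x),
\end{equation*}
so $S = k I_X$ is a scalar multiple of the identity, exhibiting $(x_j, x_j^*)_{j=1}^N$ as a FUNTF of length $N$ for $X$.

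There is no serious obstacle to this argument: the entire content is the existence of an Auerbach basis, which is classical and works over both $\mathbb{R}$ and $\mathbb{C}$ (it can be proved by a compactness argument maximizing $|\det(y_1, \dotsc, y_n)|$ over unit vectors in any fixed coordinate system). The harder and more interesting question, handled elsewhere in Section~\ref{S:exists}, is what to do when $N$ is \emph{not} a multiple of $n$, since one can no longer simply duplicate a shorter FUNTF.
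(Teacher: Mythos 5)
Your proof is correct and is essentially the paper's own argument: both take an Auerbach basis (which is a FUNTF of length $n$) and concatenate $N/n$ copies of it. You simply spell out the computation that the frame operator of the concatenation is $k I_X$, which the paper leaves implicit.
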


\begin{proof}
Let $(e_j, e_j^*)_{j=1}^n$ be an Auerbach basis for $X$; this is clearly a FUNTF of length $n$ for $X$.
When $N$ is a larger multiple of $n$, we simply take $N/n$ copies of the Auerbach basis.
\end{proof}

It should be mentioned that in general Auerbach bases are not unique. In a recent paper \cite{Weber-Wojciechowski}, where they prove an old conjecture of Pe{\l}czy\'{n}ski,
Weber and Wojciechowski show that a Banach space of dimension $n>2$ has at least $(n-1)n/2+1$ Auerbach bases.

Note that if $y \in X$ and $y^* \in X^*$ satisfy $1= \n{y} = \n{y^*} = y^*(y)$ then the operator $x \mapsto y^*(x) y$ is a norm one, rank one projection, and in fact this characterizes the norm one, rank one projections. Therefore, looking for FUNTFs corresponds to figuring out when a multiple of the identity can be written as a sum of norm one, rank one projections.
In the Hilbert space case, invertible operators that can be written as such sums have a very nice characterization \cite{Kornelson-Larson}: they need to be positive and have integer trace at least the dimension of the space.
For complex Banach spaces, a related question (dropping the norm one condition) also has a very satisfactory answer: 
\cite[Thm. 4.4]{Bart-Ehrhardt-Silbermann-LogarithmicResidues} shows that an operator $T : X \to X$ is a sum of rank one projections if and only if $\tr(T)$ is an integer and $\rank(T) \le \tr(T)$.

We have not been able to obtain, for a general finite-dimensional Banach space $X$, a
characterization of the invertible operators $X \to X$ that can be written as a sum of $N$ norm one, rank one projections.
Nevertheless, we prove below that in the special case of a diagonal operator on a  complex space with a 1-unconditional basis, the ``obvious'' conditions are enough. As a consequence, we prove the existence of FUNTFs on such spaces (and a few others).
The crux of the argument is given by the following lemma, where the number of projections is equal to the dimension of the space.

\begin{lemma}\label{lemma-diagonal-as-sum-of-RONOPs}
Let $X$ be a complex $n$-dimensional Banach space (or a $2$-dimensional real space) with a normalized $1$-unconditional basis $(e_j)_{j=1}^n$ and corresponding biorthogonal functionals $(e_j^*)_{j=1}^n$.
For any sequence of nonnegative numbers $(\lambda_j)_{j=1}^n$ with $\sum_{j=1}^n \lambda_j = n$, the operator $T : X \to X$ given by
$$
T = \sum_{j=1}^n \lambda_j e_j^* \otimes e_j
$$
can be written as a sum of $n$ norm one, rank one projections.
\end{lemma}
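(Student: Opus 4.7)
My plan is to construct the $n$ projections explicitly via a truncated discrete Fourier transform, after first selecting positive weights by convex duality. The complex DFT gives a unitary matrix in which every entry has modulus exactly $1/\sqrt n$, a property unavailable from a generic real orthogonal matrix in dimension $n>2$, which is what forces the complex-or-$n=2$ hypothesis.

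First I restrict to the subspace $Y = \spn\{e_j : \lambda_j > 0\}$, of dimension $m \le n$. By $1$-unconditionality the coordinate projection $X \to Y$ has norm one, so the zero-extension $Y^* \hookrightarrow X^*$ is isometric and any norm-one rank-one projection built on $Y$ extends to one on $X$. It therefore suffices to work in $Y$, where every $\lambda_j > 0$.

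Second, I produce positive scalars $\alpha_j, \beta_j$ with $\alpha_j\beta_j = \lambda_j$ such that $u = \sum_j \alpha_j e_j$ has $\n{u}_Y = \sqrt n$ and $v = \sum_j \beta_j e_j^*$ is a norming functional for $u$ in $Y$. For this I minimize on $\R^m$ the function
\[
h(\xi) = \tfrac{1}{2} \n{\textstyle\sum_j e^{\xi_j} e_j}_Y^{2} - \sum_j \lambda_j \xi_j.
\]
The first term is convex: the norm is a supremum over nonnegative $w^* \in B_{Y^*}$ of $\sum_j w^*_j e^{\xi_j}$ (each convex in $\xi$, so the supremum is convex), and squaring a nonnegative convex function preserves convexity. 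Because each $\lambda_j > 0$, the linear term is dominated by the exponential growth of the first term in every direction, so $h$ is coercive and attains a minimum at some $\xi^*$. A chain-rule / Danskin computation identifies the subdifferential of the first term at $\xi^*$ with $\{(\alpha_j w_j)_j : w \in J_Y(u)\}$ where $\alpha_j = e^{\xi^*_j}$ and $J_Y$ is the duality mapping; so some norming $w \in J_Y(u)$ satisfies $\alpha_j w_j = \lambda_j$. A short argument using $1$-unconditionality shows that any norming functional of a positive-coordinate vector in a $1$-unconditional basis automatically has nonnegative coordinates, so $\beta_j := w_j$ works. The identity $\n{u}^2 = w(u) = \sum_j \lambda_j = n$ then forces $\n{u}_Y = \n{v}_{Y^*} = \sqrt n$.

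Finally, with $\omega = e^{2\pi i/n}$ and $c_{kj} = \omega^{(k-1)(j-1)}/\sqrt n$, set
\[
y_k = \sum_j \alpha_j c_{kj} e_j, \qquad y_k^* = \sum_j \beta_j \overline{c_{kj}} e_j^*, \qquad k = 1, \dots, n,
\]
with $y_k^*$ zero-extended to $X^*$. Because $|c_{kj}| = 1/\sqrt n$ is constant and the basis is $1$-unconditional, $\n{y_k}_X = \tfrac{1}{\sqrt n}\n{u}_Y = 1$ and similarly $\n{y_k^*}_{X^*} = 1$; the identity $\sum_j \alpha_j\beta_j|c_{kj}|^2 = \tfrac1n\sum_j\lambda_j = 1$ gives $y_k^*(y_k) = 1$, so each $y_k^* \otimes y_k$ is a norm-one rank-one projection. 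Column orthogonality $\sum_k \overline{c_{kj}} c_{kl} = \delta_{jl}$ (valid since $m \le n$) then gives $\sum_k y_k^* \otimes y_k = T$. In the real $n = 2$ case the same proof works because the $2 \times 2$ DFT is real (the Hadamard matrix). The main obstacle is the weight-selection step: it requires convex duality for a possibly non-smooth norm and relies essentially on the coercivity of $h$, which is the reason the reduction to strictly positive weights cannot be skipped.
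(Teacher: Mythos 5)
Your proof is correct and follows essentially the same route as the paper's: a Lozanovski\u\i{}-type factorization of the weights $\lambda_j$ followed by discrete-Fourier-transform modulation to turn the factored pair into $n$ norm-one, rank-one projections summing to $T$. The only substantive difference is that the paper simply cites Lozanovski\u\i's factorization theorem (and Jamison--Ruckle for the finite-dimensional case), whereas you re-derive the needed factorization via the convex minimization of $h$, which in turn forces your preliminary reduction to strictly positive weights --- a step the cited theorem renders unnecessary.
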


\begin{proof}
The following argument is inspired by the discrete Fourier transform, and is related to similar constructions in the Hilbert space case
\cite{GKK,Zimmermann}.
We assume first that the space is complex.
Note that the nonnegative numbers $\lambda_j/n$ add up to one,
so by Lozanovski\u\i's factorization theorem \cite{Lozanovskii} (see also \cite{Jamison-Ruckle} for the specific finite-dimensional case we need here)
there exist sequences of nonnegative numbers $(\alpha_j)_{j=1}^n$ and $(\beta_j)_{j=1}^n$ such that
both $x = \sum_{j=1}^n \alpha_j e_j$ and $x^* = \sum_{j=1}^n \beta_j e^*_j$ have norm one, and $\alpha_j\beta_j = \lambda_j/n$ for each $1 \le j \le n$.
Let $\omega_n = e^{-2\pi i/ n}$ and
for $0 \le k \le n-1$, let
$$
x_k = \sum_{j=1}^n \omega_n^{kj} \alpha_j e_j \quad \text{and} \quad  x_k^* = \sum_{j=1}^n \omega_n^{-kj} \beta_j e_j^* 
$$
Note that both $x_k$ and $x_k^*$ have norm one by the $1$-unconditionality of $(e_j)_{j=1}^n$, and moreover $x_k^*(x_k) = \sum_{j=1}^n \alpha_j\beta_j = 1$.
Writing the map $x_k^* \otimes x_k$ as a matrix with respect to the bases $(e_j)_{j=1}^n$ and $(e^*_j)_{j=1}^n$, its entry in the $(i,j)$ position is
$$
\omega_n^{ki} \alpha_i \omega_n^{-kj} \beta_j
$$
and therefore the entry of $\sum_{k=0}^{n-1}x_k^* \otimes x_k $ in the $(i,j)$ position is
\begin{enumerate}[(a)]
\item When $i = j$:
$$
\sum_{k=0}^{n-1} \alpha_j\beta_j = n \lambda_j/n = \lambda_j
$$
\item When $i \not= j$:
$$
\alpha_i\beta_j \sum_{k=0}^{n-1} \omega_n^{k(i-j)} = 0 
$$
\end{enumerate}
That is, $\sum_{j=1}^n \lambda_j e_j^* \otimes e_j = \sum_{k=0}^{n-1}x_k^* \otimes x_k$.

A very similar argument works in the real case for $n=2$: take
$x_1 = \alpha_1 e_1 + \alpha_2 e_2$ ,  $x_2 = \alpha_1 e_1 - \alpha_2 e_2$,
$x^*_1 = \beta_1 e_1 + \beta_2 e_2$ and $x^*_2 = \beta_1 e_1 - \beta_2 e_2$;
it follows that
$$
x_1^* \otimes x_1 + x_2^* \otimes x_2 = \lambda_1 e_1^* \otimes e_1 + \lambda_2 e_2^* \otimes e_2.
$$
\end{proof}

Now we can prove the existence of FUNTFs in a wide variety of spaces.

\begin{proposition}\label{proposition-integer-trace-implies-sum-of-RONOPs}
Let $X$ be a complex $n$-dimensional Banach space (or a $2$-dimensional real space) with a normalized $1$-unconditional basis $(e_j)_{j=1}^n$ and corresponding biorthogonal functionals $(e_j^*)_{j=1}^n$.
Let $N \ge n$ be an integer, and assume the nonnegative numbers $(\lambda_j)_{j=1}^n$ satisfy $\sum_{j=1}^n \lambda_j = N$.
Then the operator $T : X \to X$ given by
$$
T = \sum_{j=1}^n \lambda_j e_j^* \otimes e_j
$$
can be written as a sum of $N$ norm one, rank one projections.
In particular, there exists a FUNTF of length $N$ for $X$.
\end{proposition}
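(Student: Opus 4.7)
The plan is to reduce the general case to Lemma \ref{lemma-diagonal-as-sum-of-RONOPs} by peeling off $N-n$ ``diagonal'' norm-one rank-one projections of the form $e_j^* \otimes e_j$ so that what remains is a diagonal operator whose coefficients are nonnegative and sum to exactly $n$. Specifically, I would seek nonnegative integers $a_1, \dotsc, a_n$ with $a_j \le \lfloor \lambda_j \rfloor$ and $\sum_{j=1}^n a_j = N-n$. Setting $\lambda'_j = \lambda_j - a_j \ge 0$ yields $\sum_{j=1}^n \lambda'_j = n$, so Lemma \ref{lemma-diagonal-as-sum-of-RONOPs} expresses $T' = \sum_{j=1}^n \lambda'_j e_j^* \otimes e_j$ as a sum of $n$ norm-one rank-one projections. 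Then
\[
T = T' + \sum_{j=1}^n a_j\, e_j^* \otimes e_j
\]
is a sum of $n + (N-n) = N$ norm-one rank-one projections, as each repeated $e_j^*\otimes e_j$ is itself a norm-one rank-one projection (by $1$-unconditionality of the basis, $\|e_j\|=\|e_j^*\| = e_j^*(e_j) = 1$).

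The only thing to verify is that such integers $(a_j)$ exist, which reduces to checking $\sum_{j=1}^n \lfloor \lambda_j \rfloor \ge N-n$. This is an elementary fractional-part computation: since each $\lambda_j - \lfloor \lambda_j\rfloor \in [0,1)$, we have $\sum_j (\lambda_j - \lfloor \lambda_j\rfloor) \le n$, and hence $\sum_j \lfloor \lambda_j \rfloor \ge N - n$. With this budget in hand one just greedily increments the $a_j$'s (starting from $0$ and never exceeding $\lfloor \lambda_j \rfloor$) until they sum to $N-n$.

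For the FUNTF conclusion, I would apply the first part with the uniform choice $\lambda_j = N/n$ for all $j$, which is nonnegative and satisfies $\sum_j \lambda_j = N$. The resulting operator $T = (N/n) I_X$ can then be written as $\sum_{k=1}^N x_k^* \otimes x_k$ with each $\|x_k\| = \|x_k^*\| = x_k^*(x_k) = 1$, so $(x_k, x_k^*)_{k=1}^N$ is by definition a FUNTF of length $N$ for $X$.

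The main obstacle is entirely arithmetic (finding the integers $a_j$), not analytic: once Lemma \ref{lemma-diagonal-as-sum-of-RONOPs} is available the reduction is essentially bookkeeping. There is no difficulty in the complex $n$-dimensional case versus the real $2$-dimensional case, since both are handled by the lemma, and the splitting argument above depends only on the form $T = \sum \lambda_j e_j^*\otimes e_j$ with respect to a $1$-unconditional basis.
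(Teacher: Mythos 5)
Your proof is correct and follows essentially the same route as the paper: both reduce to Lemma \ref{lemma-diagonal-as-sum-of-RONOPs} by peeling off $N-n$ diagonal projections $e_j^*\otimes e_j$, the only difference being that the paper removes them one at a time by induction on $N$ (using that some $\lambda_{j_0}>1$ when the trace exceeds $n$), while you remove them all at once via the floor-function bookkeeping $\sum_j \lfloor\lambda_j\rfloor \ge N-n$. Both arguments are valid and of the same substance.
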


\begin{proof}
We will proceed by induction on $N$.
If $N=n$, we're done by Lemma \ref{lemma-diagonal-as-sum-of-RONOPs}.
Now suppose the statement holds whenever the operator has trace $N$, and take a sequence of positive numbers $(\lambda_j)_{j=1}^n$ adding up to $N+1$. Note that there exists $\lambda_{j_0}$ strictly greater than one, since $n < N+1$.
Consider now the sequence $(\lambda'_j)_{j=1}^n$ where $\lambda_{j_0}$ is replaced by $\lambda_{j_0}-1$; the corresponding operator $T' = \sum_{j=1}^n \lambda'_j e_j^* \otimes e_j$  can be expressed as a sum of $N$ rank one, norm one projections; adding $e_{j_0}^* \otimes e_{j_0}$ gives a corresponding decomposition for $T = \sum_{j=1}^n \lambda_j e_j^* \otimes e_j $.
\end{proof}


Putting together our various results, this is the most general setting where the frame potential can be used to find tight unit norm frames.

\begin{theorem}
Let $X$ be a complex $n$-dimensional Banach space (or a $2$-dimensional real space) with a normalized $1$-unconditional basis.
A sequence $(x_j,x_j^*)_{j=1}^N$ in $X \times X^*$ satisfying $x_j^*(x_j) = 1$ for each $1 \le j \le N$ 
minimizes the $2$-summing norm of its associated frame operator if and only if it is a FUNTF.
\end{theorem}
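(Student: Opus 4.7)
The plan is to recognize this theorem as essentially a packaging of the two main results already proved. The first ingredient is Theorem \ref{thm-pi2-as-frame-potential}, which applies to any finite-dimensional Banach space: for every sequence $(x_j, x_j^*)_{j=1}^N$ with $x_j^*(x_j) = 1$, it yields the lower bound $\pi_2(S) \geq N/\sqrt{n}$ together with the sharp equality condition that $S = (N/n) I_X$, i.e., that the sequence is tight. This already delivers the ``if'' direction: every FUNTF automatically satisfies $\pi_2(S) = N/\sqrt{n}$ and so minimizes.

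For the converse, I first need to know that the lower bound $N/\sqrt{n}$ is actually attained; otherwise there would be no minimizers and the equivalence would be vacuous. This is where the structural hypothesis on $X$ (complex $n$-dimensional with a $1$-unconditional basis, or real $2$-dimensional with a $1$-unconditional basis) comes into play. I would apply Proposition \ref{proposition-integer-trace-implies-sum-of-RONOPs} with the constant weights $\lambda_j = N/n$ for $1 \le j \le n$, which are nonnegative and sum to $N$. That proposition then writes the diagonal operator $(N/n) I_X$ as a sum of $N$ norm-one rank-one projections, which is exactly a FUNTF of length $N$ with frame operator $(N/n) I_X$. So the infimum is attained.

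Combining the two steps: the minimum value of $\pi_2(S)$ over admissible sequences equals $N/\sqrt{n}$, and the minimizers are precisely those with $S = (N/n) I_X$ — that is, precisely the FUNTFs — by the equality case of Theorem \ref{thm-pi2-as-frame-potential}.

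I do not expect any obstacle here, because all the real work has been done already. The delicate arguments (smoothness of the $2$-summing norm at $I_X$ in Theorem \ref{T:smooth}, and the discrete-Fourier/Lozanovski\u\i\ construction in Lemma \ref{lemma-diagonal-as-sum-of-RONOPs}) live inside the earlier results. This final theorem is just the clean statement that falls out once both are in place; the only thing to be careful about is pointing out that without the existence result, the ``only if'' direction would be empty, so the $1$-unconditional basis hypothesis is essential not for the inequality itself but for guaranteeing that it is an equality for some admissible sequence.
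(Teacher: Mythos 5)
Your proposal is correct and follows essentially the same route as the paper: invoke Theorem \ref{thm-pi2-as-frame-potential} for the lower bound and its equality case, then apply Proposition \ref{proposition-integer-trace-implies-sum-of-RONOPs} to the operator $\tfrac{N}{n} I_X$ to show the bound is attained. Your added remark that the $1$-unconditionality hypothesis is needed only to guarantee attainment, not for the inequality itself, is a correct and worthwhile clarification of why the structural assumption appears.
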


\begin{proof}
According to Theorem \ref{thm-pi2-as-frame-potential}, all we need to do is show the existence of one such sequence whose frame operator has $2$-summing norm exactly $\frac{N}{\sqrt{n}}$;
this is a consequence of Proposition \ref{proposition-integer-trace-implies-sum-of-RONOPs} applied to the operator $ \tfrac{N}{n} I_{X}$.
\end{proof}

\section{Smoothness and strict convexity of $\Pi_2(X,X)$}

In Theorem \ref{T:smooth}, we proved that the space of $2$-summing operators $\Pi_2(X,X)$, is smooth at $I_X$ for any finite-dimensional Banach space $X$.
Setting our aim higher, it would be interesting to know when $\Pi_2(X,X)$ is itself smooth at every point.
Notice that $X^*$ embeds isometrically into $\Pi_2(X,X)$, so $X$ being strictly convex will be a necessary condition, as strict convexity and smoothness are dual properties.
Moreover $\Pi_2(X,X)$ is in trace duality with itself, so we may equivalently study when $\Pi_2(X,X)$ is strictly convex. 
Recall that a Banach space $Y$ is called {\em strictly convex} if whenever $x,y\in Y$ are such that $\|x\|=\|y\|=\frac12\|x+y\|$ we have that $x=y$. 
The following result gives a characterization of when the space of $2$-summing operators is strictly convex in a slightly more general setting,  and is done in terms of a unique-extension condition for $2$-summing maps on subspaces.

\begin{lemma}\label{lemma-strict-convexity-of-pi2}
Let $X$ and $Y$ be finite-dimensional Banach spaces, with $Y$ being strictly convex. 
The following conditions are equivalent:
\begin{enumerate}[(a)]
\item $\Pi_2(X,Y)$ is strictly convex.
\item For any subspace $E$ of $X$ and any linear operator $t : E \to Y$, there exists a unique linear extension $T : E \to Y$ with $\pi_2(T) = \pi_2(t)$.
\end{enumerate}
\end{lemma}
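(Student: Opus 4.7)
The plan is to prove the two implications separately, with the reverse direction being the substantive one and the one that actually uses the strict convexity of $Y$.

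For (a)$\Rightarrow$(b), first recall that any $t:E\to Y$ with $\pi_2(t)=c$ extends to some $T:X\to Y$ with $\pi_2(T)=c$: factor $t$ through $L_\infty(\mu)\to L_2(\mu)\to Y$ via Pietsch and extend the $L_\infty$-valued factor from $E$ to $X$ by Hahn--Banach. Any extension $T$ of $t$ automatically has $\pi_2(T)\ge \pi_2(t)$ because, for a sequence $(x_j)\subseteq E$, the weak-$\ell_2$ norm is the same whether computed with $f\in E^*$ or $f\in X^*$ (Hahn--Banach again). Hence the set of norm-preserving extensions is a convex subset of $\{T:\pi_2(T)=c\}$; if $T_1\neq T_2$ are both in it, their midpoint is as well, contradicting strict convexity of $\Pi_2(X,Y)$.

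For (b)$\Rightarrow$(a), I argue by contrapositive. Suppose $T_1,T_2\in\Pi_2(X,Y)$ satisfy $\pi_2(T_1)=\pi_2(T_2)=\pi_2(T)=1$ with $T=(T_1+T_2)/2$; set $E=\ker(T_1-T_2)$ and $t=T|_E=T_1|_E=T_2|_E$. The main step is to prove $\pi_2(t)=1$, for then $T_1$ and $T_2$ are two norm-preserving extensions of $t$ and (b) forces $T_1=T_2$. Since $X$ is finite-dimensional the supremum defining $\pi_2(T)$ is attained, so pick $(x_j)_{j=1}^k\subseteq X$ with $\sup_{\|f\|_{X^*}\le 1}\sum_j|f(x_j)|^2\le 1$ and $\sum_j\|Tx_j\|^2=1$. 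The triangle inequality followed by QM--AM gives
\[
1=\sum_j\|Tx_j\|^2 \le \sum_j\Bigl(\tfrac{\|T_1x_j\|+\|T_2x_j\|}{2}\Bigr)^2\le \sum_j\tfrac{\|T_1x_j\|^2+\|T_2x_j\|^2}{2}\le \tfrac{\pi_2(T_1)^2+\pi_2(T_2)^2}{2}=1,
\]
so equality holds term by term. Equality in QM--AM yields $\|T_1x_j\|=\|T_2x_j\|$; equality in the triangle inequality yields $\|T_1x_j+T_2x_j\|=\|T_1x_j\|+\|T_2x_j\|$; and the strict convexity of $Y$ then forces $T_1x_j=T_2x_j$, i.e.\ $x_j\in E$ for every $j$. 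Because the weak-$\ell_2$ norm of $(x_j)\subseteq E$ computed via $E^*$ agrees with that computed via $X^*$, we obtain $\pi_2(t)^2\ge \sum_j\|tx_j\|^2=\sum_j\|Tx_j\|^2=1$, while the reverse inequality $\pi_2(t)\le \pi_2(T)=1$ holds by restriction. This proves the claim.

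The hard part is the equality-case analysis in (b)$\Rightarrow$(a); this is the only place where strict convexity of $Y$ enters, and the step is sharp in the sense that without it one cannot localize an extremal sequence for $\pi_2(T)$ inside $E$. Everything else — the Pietsch-based extension theorem, the Hahn--Banach identification of weak-$\ell_2$ norms on a subspace, and the elementary convexity/triangle computations — is standard and routine.
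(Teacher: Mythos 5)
Your proof is correct and follows essentially the same route as the paper's: the forward direction is the same convexity-of-the-set-of-extensions argument (the paper phrases it contrapositively), and the reverse direction is the same equality-case analysis of the chain of inequalities for an extremal sequence, using strict convexity of $Y$ to force $T_1x_j=T_2x_j$. The only cosmetic differences are that you take $E=\ker(T_1-T_2)$ where the paper takes $E=\spn(x_j)$, and you run the equality analysis termwise via QM--AM rather than through Minkowski's inequality on the $\ell_2$-norms.
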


\begin{proof}
(b) $\Rightarrow$ (a):
Let $T,S : X \to Y$ be linear operators with $\pi_2(T) = \pi_2(S) =  \pi_2\big( \tfrac{1}{2}(S+T) \big) = 1$.
Let $(x_1)_{j=1}^N$ be a collection of vectors in $X$ such that
$$
\sum_{j=1}^N |f(x_j)|^2\leq \|f\|^2_{X^*} \textrm{ for all } f\in X^*
 \quad \text{ and } \quad \pi_2\big( \tfrac{1}{2}(S+T) \big) = \Big( \sum_{j=1}^N \n{\tfrac{1}{2}(S+T)x_j}^2 \Big)^{1/2}.
$$
Note that
\begin{align*}
\pi_2\big( \tfrac{1}{2}(S+T) \big) &=  \n{ (\tfrac{1}{2}\n{Sx_j+Tx_j} )_{j=1}^N}_{\ell_2} \le  \n{ (\tfrac{1}{2}\n{Sx_j} + \tfrac{1}{2}\n{Tx_j} )_{j=1}^N}_{\ell_2} \\
&=  \n{ \tfrac{1}{2}( \n{Sx_j} )_{j=1}^N + \tfrac{1}{2}(\n{Tx_j} )_{j=1}^N}_{\ell_2} \\
&\le \tfrac{1}{2} \n{ ( \n{Sx_j} )_{j=1}^N}_{\ell_2} + \tfrac{1}{2}\n{(\n{Tx_j} )_{j=1}^N}_{\ell_2} \\
&\le \tfrac{1}{2} \pi_2(S) + \tfrac{1}{2} \pi_2(T)
 \end{align*}
Since we have that $\pi_2\big( \tfrac{1}{2}(S+T) \big) = \tfrac{1}{2} \pi_2(S) + \tfrac{1}{2} \pi_2(T)$, all the inequalities above must be equalities and it follows that $Tx_j = Sx_j$ for $1 \le j \le N$.
Let $E = \spn(x_j)_{j=1}^N \subset X$, and let $v : E \to Y$ be the restriction of $T$ (or $S$) to $E$. By the choice of the sequence $(x_j)_{j=1}^N$, note that $\pi_2(v) = 1$.
By assumption, there is a unique extension of $v$ to an operator $V : X \to Y$ with $\pi_2(V) = 1$. Since both $S$ and $T$ are extensions of $v$ with $2$-summing norm equal to 1, it follows that $S=T=V$.

(a) $\Rightarrow$ (b):
We will prove the contrapositive.
Suppose there exist a subspace $E \subseteq X$ and an operator $t : E \to Y$ admitting two distinct extensions $S,T : X \to Y$ with $\pi_2(t) = \pi_2(S) = \pi_2(T)$. By homogeneity, we may assume $\pi_2(t) = 1$.
Note that $ \tfrac{1}{2}(S+T)$ is also an extension of $t$, and therefore
$$
1 = \pi_2(t) \le \pi_2\big( \tfrac{1}{2}(S+T)\big) \le \tfrac{1}{2} \pi_2(S) + \tfrac{1}{2} \pi_2(T) = 1,
$$
so $ \pi_2\big( \tfrac{1}{2}(S+T)\big) = 1$, showing that $\Pi_2(X,Y)$ is not strictly convex.
\end{proof}

It is a well-known and important property of $2$-summing maps that if $t : E \to Y$ is $2$-summing, and $E$ is a subspace of $X$, then there exists an extension $T : X \to Y$ with $\pi_2(T) = \pi_2(t)$ (this follows easily from the Pietsch factorization theorem and the $1$-injectivity of $L_\infty$ spaces, see \cite[Thm. 4.15]{Diestel-Jarchow-Tonge}).
Lemma \ref{lemma-strict-convexity-of-pi2} above shows that the uniqueness of such extensions is related to geometric properties of the space of $2$-summing maps, and is of the same nature as the following classical theorem due to Taylor \cite{Taylor-Extension} and Foguel \cite{Foguel}:

\begin{theorem}\label{thm-Taylor-Foguel}
For a normed space $X$, 
the following conditions are equivalent:
\begin{enumerate}[(a)]
\item $X^*$ is strictly convex.
\item For any subspace $E$ of $X$ and any linear functional $t : E \to \K$, there exists a unique linear extension $T : E \to \K$ with $\n{T} = \n{t}$.
\end{enumerate}
\end{theorem}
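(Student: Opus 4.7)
The plan is to prove the two implications separately, with (a) $\Rightarrow$ (b) being essentially the same averaging argument already used in Lemma \ref{lemma-strict-convexity-of-pi2}, and (b) $\Rightarrow$ (a) requiring a more delicate construction of the witnessing subspace.

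For (a) $\Rightarrow$ (b), I would start with two norm-preserving extensions $T_1, T_2 : X \to \K$ of a functional $t : E \to \K$. Their midpoint $(T_1+T_2)/2$ is also an extension of $t$, so its norm is at least $\n{t}$; on the other hand, the triangle inequality bounds it above by $\tfrac{1}{2}(\n{T_1}+\n{T_2}) = \n{t}$. Hence $T_1, T_2,$ and $(T_1+T_2)/2$ are three elements of $X^*$ with the same norm, and strict convexity of $X^*$ forces $T_1 = T_2$.

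For (b) $\Rightarrow$ (a), I would argue by contrapositive. Suppose $X^*$ is not strictly convex, so there exist distinct $f, g \in X^*$ with $\n{f} = \n{g} = \n{(f+g)/2} = 1$. The natural candidate for the subspace is $E = \ker(f-g)$, which has codimension one in $X$; on $E$ the functionals $f$ and $g$ agree, and I would let $t$ denote this common restriction. Both $f$ and $g$ are then extensions of $t$, and the inequality $\n{t} \le \n{f} = 1$ is immediate, so what remains is to show $\n{t} = 1$, which would exhibit $f$ and $g$ as two \emph{norm-preserving} extensions of $t$ and contradict (b).

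The heart of the argument, and the step I expect to be the main obstacle since it does not follow from the strict convexity failure in a formal way, is verifying that $\n{t} \ge 1$ using only that $h := (f+g)/2$ has norm one on $X$. The plan is to choose a norming sequence $(x_n) \subseteq X$ with $\n{x_n} \le 1$ and $h(x_n) \to 1$ (possibly after a scalar rotation). Since $f(x_n) + g(x_n) = 2h(x_n) \to 2$ while $|f(x_n)|, |g(x_n)| \le 1$, both $f(x_n)$ and $g(x_n)$ must tend to $1$, and therefore $(f-g)(x_n) \to 0$. Fixing any $z_0 \in X$ with $(f-g)(z_0) = 1$, the vectors $y_n := x_n - (f-g)(x_n)\,z_0$ lie in $E$ and satisfy $\n{y_n} \le 1 + |(f-g)(x_n)|\,\n{z_0} \to 1$, while $t(y_n) = f(y_n) = f(x_n) - (f-g)(x_n) f(z_0) \to 1$. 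This yields $\n{t} \ge \lim t(y_n)/\n{y_n} = 1$, completing the proof.
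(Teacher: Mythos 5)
The paper does not actually prove Theorem \ref{thm-Taylor-Foguel}: it is quoted as a classical result with the proof deferred to the cited papers of Taylor and Foguel, so there is no in-paper argument to compare against. Your proof is correct and is essentially the standard one. The direction (a) $\Rightarrow$ (b) is the same midpoint argument the paper uses for the analogous implication of Lemma \ref{lemma-strict-convexity-of-pi2}: the average of two norm-preserving extensions is again an extension, hence has norm exactly $\n{t}$, and strict convexity of $X^*$ (as defined in the paper, which handles the degenerate case $t=0$ automatically) forces the two extensions to coincide. For (b) $\Rightarrow$ (a), the choice $E=\ker(f-g)$ is the right one, and you correctly isolate and close the only nontrivial step, namely that $\n{t}=1$: from a norming sequence $(x_n)$ for $h=(f+g)/2$ one gets $f(x_n)\to 1$ and $g(x_n)\to 1$ (in the complex case take real parts first, then use $|f(x_n)|\le 1$ to force the imaginary parts to vanish), hence $(f-g)(x_n)\to 0$; the corrected vectors $y_n=x_n-(f-g)(x_n)z_0$ lie in $E$ with $\n{y_n}\to 1$ and $t(y_n)\to 1$, giving $\n{t}\ge 1$, while $\n{t}\le\n{f}=1$ is automatic. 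Thus $f$ and $g$ are two distinct norm-preserving extensions of $t$, contradicting (b). This is exactly the classical Taylor--Foguel argument, and it is complete as written.
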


There are results related to Theorem \ref{thm-Taylor-Foguel} that give equivalent geometrical characterizations of unique extension properties (not only the general situation above, but also specializations to extensions for a given fixed subspace $E \subseteq X$ or even for a given fixed functional  $t : E \to \K$; see for example \cite{BR-nested-sequences} and \cite{Oja-Poldvere-Intersection-properties}). 
Putting together Lemma \ref{lemma-strict-convexity-of-pi2} and \cite[Thm. 3.1]{Oja-Poldvere-Intersection-properties}, we get the following:

\begin{corollary}
Let $X$ and $Y$ be finite-dimensional Banach spaces, with $Y$ being strictly convex. 
The following conditions are equivalent:
\begin{enumerate}[(a)]
\item $\Pi_2(X,Y)$ is strictly convex.
\item For any subspace $E$ of $X$ and any linear operator $t : E \to Y$, there exists a unique linear extension $T : E \to Y$ with $\pi_2(T) = \pi_2(t)$.
\item For any subspace $E$ of $X$, any $\varepsilon>0$, any $T \in \Pi_2(Y,X)$ and  any sequence $(S_n)$ in $\Pi_2(Y,E)$ with $\pi_2(S_1) \le 1$ and $\pi_2(S_{n+1} - S_n) \le 1$ for all $n\in\N$, there exist $S \in \Pi_2(Y,E)$ and $n_0 \in \N$ such that
$$
\pi_2(T - S \pm S_{n_0} ) \le n_0 + \varepsilon.
$$
\end{enumerate}
\end{corollary}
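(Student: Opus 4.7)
The plan is to deduce the statement by combining Lemma \ref{lemma-strict-convexity-of-pi2} with \cite[Thm. 3.1]{Oja-Poldvere-Intersection-properties}.

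The equivalence (a) $\Leftrightarrow$ (b) is exactly the content of Lemma \ref{lemma-strict-convexity-of-pi2}, so I would cite it directly. The real work is in establishing (b) $\Leftrightarrow$ (c), and for this I would use a trace-duality reformulation to convert (b) into a unique Hahn--Banach extension condition, then invoke the Oja--Poldvere theorem.

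Fix a subspace $E \subseteq X$. By the extension theorem for $2$-summing operators \cite[Thm. 4.15]{Diestel-Jarchow-Tonge}, the restriction map $\Pi_2(X,Y) \to \Pi_2(E,Y)$ is a metric surjection, so its Banach-space adjoint is an isometric embedding. Since $X$, $Y$ and $E$ are finite-dimensional, trace duality identifies $\Pi_2(X,Y)^*$ isometrically with $\Pi_2(Y,X)$ and $\Pi_2(E,Y)^*$ with $\Pi_2(Y,E)$, and under these identifications the adjoint of the restriction is simply the inclusion $\Pi_2(Y,E) \hookrightarrow \Pi_2(Y,X)$. Consequently, the unique-extension condition in (b), restricted to this fixed $E$, is equivalent to the assertion that every bounded linear functional on $\Pi_2(Y,E)$ (regarded as a subspace of $\Pi_2(Y,X)$) admits a unique norm-preserving Hahn--Banach extension to $\Pi_2(Y,X)$.

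With this translation in hand, I would apply \cite[Thm. 3.1]{Oja-Poldvere-Intersection-properties} to the pair $\Pi_2(Y,E) \subseteq \Pi_2(Y,X)$: that theorem characterizes unique Hahn--Banach extension properties for a fixed subspace via precisely the nested-sequence / intersection condition that appears verbatim in (c). Assembling this equivalence over every subspace $E \subseteq X$ then yields (b) $\Leftrightarrow$ (c). The main obstacle is essentially bookkeeping rather than geometry: one must carefully check that the trace-duality identification $\Pi_2(X,Y)^* \cong \Pi_2(Y,X)$ is isometric (which is why finite-dimensionality enters the hypothesis), that the adjoint of the restriction is indeed the natural inclusion on the dual side, and that the universal quantifier over subspaces $E$ in the conclusion (c) matches the one in (b) term-by-term. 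All the substantive content is packaged in the cited Oja--Poldvere theorem.
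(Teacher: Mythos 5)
Your proposal is correct and follows essentially the same route as the paper: cite Lemma \ref{lemma-strict-convexity-of-pi2} for (a) $\Leftrightarrow$ (b), use trace duality to view $\Pi_2(E,Y)$ and $\Pi_2(X,Y)$ as the duals of $\Pi_2(Y,E)\subseteq\Pi_2(Y,X)$, and then read (b) $\Leftrightarrow$ (c) off from \cite[Thm. 3.1]{Oja-Poldvere-Intersection-properties}. The only (harmless) difference is that you justify the isometric inclusion $\Pi_2(Y,E)\hookrightarrow\Pi_2(Y,X)$ by taking the adjoint of the restriction metric surjection, whereas the paper simply asserts it.
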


\begin{proof}
Notice that $\Pi_2(E,Y) = \big( \Pi_2(Y,E) \big)^*$ and $\Pi_2(X,Y) = \big( \Pi_2(Y,X) \big)^*$ via trace duality in both cases. 
Since $\Pi_2(Y,E)$ is isometrically contained in $\Pi_2(Y,X)$ in the obvious way, condition (b) is just a particular case of the uniqueness of extensions for linear functionals characterized in \cite[Thm. 3.1]{Oja-Poldvere-Intersection-properties}.
\end{proof}

In the infinite-dimensional case, the question of the uniform convexity of $\Pi_2(X,X)$ has been studied up to isomorphism.
Lin has shown that if $\Pi_2(X,Y)$ is $B$-convex (in particular, if it is superreflexive), then both $X$ and $Y$ have cotype $2+\varepsilon$ for any $\varepsilon>0$ \cite{Lin-Pi2}.
Additionally, if $E$ is superreflexive and has cotype 2 then $\Pi_2(\ell_2,E)$ is superreflexive as well.
This is done with an ultraproduct argument, and in fact it follows from the following: $\Pi_2(\ell_2^n,E)$ is isomorphic to a subspace of $L_2(E)$ when $E$ has cotype 2, and the Banach-Mazur distance between the two spaces is less than $2 C_2(E)$ (where $C_2(E)$ is the cotype 2 constant of $E$).
Pisier has proved a closely related result \cite{Pisier-Pi2}, namely that $\Pi_2(\ell_p,\ell_p)$ is superreflexive when $1<p<2$.
The argument uses complex interpolation, and explicitly what is shown is that $\Pi_2(\ell_p,\ell_p)$ has an equivalent norm that is strictly convex.

In the $2$-dimensional case, we can prove that one always has uniqueness of extensions preserving the $2$-summing norm.

\begin{proposition}\label{lemma-2-dimensional-implies-strict-convexity}
Let $X$ be a 2-dimensional, strictly convex and smooth space. Then $\Pi_2(X,X)$ is strictly convex. 
\end{proposition}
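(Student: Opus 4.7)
My plan is to apply Lemma \ref{lemma-strict-convexity-of-pi2}, which (since $X$ is strictly convex) reduces the strict convexity of $\Pi_2(X,X)$ to the following \emph{unique extension} property: every $2$-summing operator $t : E \to X$ on a subspace $E \subseteq X$ must admit exactly one extension $T : X \to X$ with $\pi_2(T) = \pi_2(t)$. Because $\dim X = 2$, only $\dim E = 1$ is nontrivial; the cases $E = \{0\}$ and $E = X$ are immediate. So I would write $E = \spn(e)$ with $\n{e}=1$, set $y = t(e)$, and note that the $2$-summing norm on a $1$-dimensional domain coincides with the operator norm (apply \eqref{E:2sum2} with $f = e^*$), giving $\pi_2(t) = \n{y}$.

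Next I would produce a canonical norm-preserving extension. Smoothness of $X$ supplies a unique normalizing functional $e^* \in X^*$ with $\n{e^*}=1=e^*(e)$, and the rank-one operator $T_0 = e^* \otimes y$ extends $t$ with $\pi_2(T_0) = \n{e^*}\n{y} = \n{y}$. Existence is therefore immediate and all the work lies in uniqueness.

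For uniqueness, assume $y \ne 0$ (the case $y=0$ forces $T=0$) and let $T : X \to X$ be any extension of $t$ with $\pi_2(T) = \n{y}$. The Pietsch factorization theorem, in its measure-theoretic incarnation, supplies a regular Borel probability measure $\mu$ on the weak-* compact ball $B_{X^*}$ such that $\n{Tx}^2 \le \n{y}^2 \int_{B_{X^*}} |f(x)|^2 \, d\mu(f)$ for every $x \in X$. Plugging in $x = e$ and using $\n{Te} = \n{y}$ yields $\int |f(e)|^2 \, d\mu = 1$; combined with the bound $|f(e)| \le 1$ on $B_{X^*}$, this forces $|f(e)|=1$ for $\mu$-almost every $f$. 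By smoothness, $f(e) = \alpha$ with $|\alpha|=1$ implies $\bar{\alpha} f = e^*$, so $f = \alpha e^*$. Therefore $\int |f(x)|^2 \, d\mu = |e^*(x)|^2$ for every $x$, and the Pietsch inequality becomes $\n{Tx} \le \n{y}\,|e^*(x)|$, which makes $T$ vanish on the one-dimensional subspace $\ker e^*$; together with $T(e) = y$ this pins down $T = e^* \otimes y = T_0$.

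The main obstacle is concentrating the Pietsch measure on the unimodular scalar multiples of $e^*$; after that, the low dimension reduces the rest to a routine dimension count. Strict convexity of $X$ is used only to justify Lemma \ref{lemma-strict-convexity-of-pi2}; the uniqueness argument itself is driven entirely by smoothness. The restriction $\dim X = 2$ enters through the observation that every proper subspace is either $\{0\}$ or one-dimensional, so this strategy does not immediately generalize to higher-dimensional strictly convex, smooth spaces without additional input on norm-preserving extensions from subspaces of dimension greater than one.
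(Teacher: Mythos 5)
Your proof is correct. It shares the paper's skeleton --- reduce via Lemma \ref{lemma-strict-convexity-of-pi2} to uniqueness of $\pi_2$-preserving extensions, note that in dimension two only one-dimensional subspaces $E=\spn(e)$ matter, and exhibit the canonical extension $e^*\otimes y$ (the paper's $t\circ P$, with $P$ the norm-one projection onto $E$, is the same operator) --- but your uniqueness argument takes a genuinely different route. The paper works directly with the definition \eqref{E:2sum2}: for a competing extension $T$ it shows $T(x_0)\neq 0$ for $x_0\in\ker f$, uses Gateaux differentiability of the norm at the unit vector of $E$ together with $\|T\|=1$, then tests the $2$-summing inequality on the pairs $(x_1,ax_0)$ and lets $a\to 0$, using smoothness to force the extremal functionals $x_a^*$ to converge to the normalizing functional of $x_1$ and conclude $\n{T(x_0)}\le |f(x_0)|=0$, a contradiction. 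You instead invoke the Pietsch domination theorem and observe that the equality $\n{Te}=\pi_2(T)$ forces the representing probability measure to concentrate on $\{\alpha e^*:|\alpha|=1\}$ --- this is exactly where smoothness enters --- which yields $\n{Tx}\le\n{y}\,|e^*(x)|$ for all $x$ and hence $T=e^*\otimes y$ at once. Your version is shorter, avoids the limiting argument and the paper's separate appeal to the operator norm of $T$, and isolates the role of smoothness more transparently; the paper's version is more elementary in that it never leaves the finite-sequence definition of $\pi_2$. Both arguments, as you correctly note, are confined to one-dimensional domains and therefore give no purchase on strictly convex smooth spaces of dimension three or higher.
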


\begin{proof}
By Lemma \ref{lemma-strict-convexity-of-pi2}, it suffices to show the uniqueness of extensions preserving the $2$-summing norm. Since $X$ is $2$-dimensional, it suffices to consider $1$-dimensional subspaces.
So let $E \subset X$ be a $1$-dimensional subspace, and $t : E \to X$ be a non-zero operator.
Since $t$ has rank one we have $\n{t} = \pi_2(t)$.
Note that any extension $T$ of $t$ preserving the $2$-summing norm will also have $2$-summing norm equal to its norm, since
$$
\pi_2(T) \ge \n{T} \ge \n{t} = \pi_2(t).
$$

There is an easy way to construct an extension that preserves the $2$-summing norm:
since every 1-dimensional subspace is 1-complemented, there is a norm one projection $P : X \to E$.  Note that then $S = t \circ P$ is an extension of $t$, clearly $\pi_2(S) \ge \pi_2(t)$ by virtue of being an extension, and $\pi_2(S) = \pi_2(t \circ P) \le \pi_2(t) \n{P} = \pi_2(t)$ so $\pi_2(S) = \pi_2(t)$.  

We now assume that $T:X\rightarrow X$ is a different extension of $t$ such that $\pi_2(T)=1$.  Choose $f\in S_{X^*}$ and $y\in S_X$ such that $t(x)=f(x)y$ for all $x\in E$.  Choose $x_0\in f^{-1}(0)$ with $\|x_0\|=1$ and choose $x_1\in S_E$ so that $f(x_1)=1$.  We have that $T(x_0)\neq 0$ because otherwise we would have $T(ax_0+bx_1)=t(bx_1)=(t\circ P)(ax_0+bx_1)$ for all scalars $a,b$.  Let $g\in S_{X^*}$ such that $g(y)=\|y\|$.  As $X$ is uniformly smooth and $f\in S_{X^*}$ is the unique normalizing functional of $x_1$,  we have that $$\lim_{a\rightarrow0}\frac{\|x_1+a x_0\|-1}{a}=f(x_0)=0
$$ 
Thus,  $\forall \epsilon>0$, $\exists \delta_\epsilon>0$ so that $\|x_1+a x_0\|<1+a\epsilon$ for all $0<a<\delta_\epsilon$.  For the sake of contradiction we assume that $g(T(x_0))\neq0$  and without loss of generality that $g(T(x_0))>0$.  Then for $0<a<\delta_{g(T(x_0))}$ we have that
$$\|T(x_1+ax_0)\|\geq g(T(x_1)+aT(x_0))=1+ag(T(x_0))>\|x_1+ax_0\|
$$
This contradicts that $\|T\|=1$.  Thus, we must have that $g(T(x_0))=0$.  As $\pi_2(T)=1$, we have that 
for all $0<a$ that there exists $x^*_a\in S_{X^*}$ such that
$$1+a^2\|T(x_0)\|^2=\|T(x_1)\|^2+\|T(ax_0)\|^2\leq |x^*_a(x_0)|^2+a^2|x^*_a(x_1)|^2
$$
By taking the limit $a\rightarrow0$ we have that $1\leq \lim_{a\rightarrow0}|x^*_a(x_0)|\leq1$ and hence as every unit norm functional which normalizes $x_0$ in absolute value is of the form  $\epsilon f$ for some $|\epsilon|=1$, we have without loss of generality that $\lim_{a\rightarrow0}x^*_a=f$.  We now have the following for all $a>0$.
\begin{align*}
1+a^2\|T(x_0)\|^2&\leq |x^*_a(x_0)|^2+a^2|x^*_a(x_1)|^2\\
1+a^2\|T(x_0)\|^2&\leq 1+a^2|x^*_a(x_1)|^2\\
a^2\|T(x_0)\|^2&\leq a^2|x^*_a(x_1)|^2\\
\|T(x_0)\|^2&\leq |x^*_a(x_1)|^2
\end{align*}
This contradicts that $\lim_{a\rightarrow0} |x^*_a(x_1)|=|f(x_1)|=0$.

\end{proof}

If $X$ is isometric to a Hilbert space then $\Pi_2(X,X)$ is isometric to a Hilbert space as well, and is hence strictly convex.  In contrast to this, the following result shows that $\Pi_2(X,X)$ fails strict convexity when $X$ is not isometric to a Hilbert space but has a non-$1$-complemented $2$-dimensional subspace which is.

\begin{theorem}
If $\ell_2^2$ is isometric to a subspace of $X$ which is not 1-complemented in $X$, then  $\Pi_2(X,X)$ is not strictly convex.
\end{theorem}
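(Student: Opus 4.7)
By Lemma~\ref{lemma-strict-convexity-of-pi2}, it suffices to exhibit a subspace $F \subseteq X$ and an operator $t : F \to X$ admitting two distinct $\pi_2$-norm-preserving extensions to $X$. The plan is to take $F = E$, the given isometric copy of $\ell_2^2$ in $X$, and $t = i_E$ the inclusion. Since $i_E$ is isometric, one has $\pi_2(i_E) = \pi_2(I_{\ell_2^2}) = \sqrt{2}$. I will parametrize $\pi_2$-preserving extensions explicitly: given an orthonormal basis $(e_1, e_2)$ of $E$ with biorthogonal functionals $e_1^*, e_2^* \in E^*$ and any Hahn--Banach extensions $g_1, g_2 \in X^*$ of $e_1^*, e_2^*$, define
\[ T_{g_1, g_2}(x) \;:=\; g_1(x)\, e_1 + g_2(x)\, e_2. \]
Since $g_i(e_j) = \delta_{ij}$, one has $T_{g_1, g_2}|_E = i_E$. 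The estimate $\sum_j \|T_{g_1, g_2} x_j\|_X^2 = \sum_j(|g_1(x_j)|^2 + |g_2(x_j)|^2) \leq \|g_1\|^2 + \|g_2\|^2 = 2$ for any weakly $\ell_2^1$ sequence $(x_j)$ (using that $E \hookrightarrow X$ is isometric and $(e_1,e_2)$ is orthonormal), together with the lower bound $\pi_2(T_{g_1, g_2}) \geq \pi_2(T_{g_1, g_2}|_E) = \sqrt{2}$, gives $\pi_2(T_{g_1, g_2}) = \sqrt{2}$.

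Now suppose for contradiction that $i_E$ has a unique $\pi_2$-preserving extension $T$. Then $T_{g_1, g_2} = T$ for every admissible pair $(g_1, g_2)$, which forces $g_i = e_i^* \circ T$ to be the \emph{unique} Hahn--Banach extension of $e_i^*$. As every unit vector in $E^*$ arises as the first coordinate functional of some orthonormal basis of $E$, one obtains a well-defined map $\Phi : E^* \to X^*$ with $\Phi(f)|_E = f$ and $\|\Phi(f)\|_{X^*} = \|f\|_{E^*}$. The critical step is showing $\Phi$ is linear: applying uniqueness of $T$ to a rotated orthonormal basis $(e_1', e_2')$ and equating the expressions $T = \sum g_i \otimes e_i = \sum \Phi(e_i'^*) \otimes e_i'$ yields
\[ \Phi(a\, e_1^* + b\, e_2^*) \;=\; a\,\Phi(e_1^*) + b\,\Phi(e_2^*) \]
for all scalars $a, b$ with $|a|^2 + |b|^2 = 1$ (using $O(2)$-rotations in the real case and $U(2)$-rotations in the complex case), which combined with homogeneity gives linearity on all of $E^*$. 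Finally, a linear norm-preserving selection of Hahn--Banach extensions $\Phi : E^* \to X^*$ induces a norm-one projection $P : X \to E$ via $f(P(x)) := \Phi(f)(x)$, contradicting the hypothesis that $E$ is not $1$-complemented. Therefore $i_E$ must admit two distinct $\pi_2$-preserving extensions, so $\Pi_2(X, X)$ fails to be strictly convex.

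The main technical step is the rotation argument producing linearity of $\Phi$; it crucially uses the rotational symmetry of $\ell_2^2$ (the transitive action of $O(2)$ or $U(2)$ on orthonormal bases of $E$), which is the reason the hypothesis is stated for $\ell_2^2$ rather than a generic two-dimensional subspace.
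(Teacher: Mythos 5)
Your proof is correct, and it shares its skeleton with the paper's: both reduce the statement via Lemma \ref{lemma-strict-convexity-of-pi2} to exhibiting two distinct $\pi_2$-preserving extensions of the inclusion of the Euclidean plane, both work with exactly the candidate extensions $g_1\otimes e_1+g_2\otimes e_2$ built from Hahn--Banach extensions $g_i$ of the coordinate functionals of an orthonormal basis, and both verify $\pi_2=\sqrt{2}$ by the same estimate. Where you genuinely diverge is in how the failure of $1$-complementedness is used. The paper's route is direct and constructive: since the copy $Y$ of $\ell_2^2$ is not $1$-complemented, the first such projection $P_1$ has norm greater than one, so there is $y$ with $\|y\|<\|P_1(y)\|=1$; rotating the orthonormal basis so that its first vector is $f_1=P_1(y)$ yields a second projection $P_2$, and $P_1=P_2$ would force $\|P_2(y)\|=|f_1^*(y)|\le\|y\|<1$, a contradiction. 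You instead argue by contradiction from uniqueness: uniqueness of the extension of $i_E$ forces uniqueness of the norm-preserving extension of every unit functional on $E$, the $O(2)$ (resp.\ $U(2)$) rotation identities make the resulting selection $\Phi:E^*\to X^*$ linear, and a linear norm-preserving selection dualizes (using reflexivity of $E$) to a norm-one projection onto $E$. Both arguments are sound; the paper's is shorter and needs only a single, explicitly chosen rotation, while yours isolates a reusable principle --- unique $\pi_2$-norm-preserving extendability of $i_E$ implies $1$-complementedness of $E$ --- and makes transparent why the full rotational symmetry of $\ell_2^2$, rather than an arbitrary two-dimensional subspace, is the essential hypothesis.
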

\begin{proof}
Let $Y\subseteq X$ be isometric to $\ell_2^2$ and not be 1-complemented in $X$.  Let $(e_1,e_2)$ be an orthonormal basis for $Y$ with biorthogonal functionals $(e^*_1,e^*_2)$ in $X^*$ such that $\|e^*_1\|=\|e^*_2\|=1$.    If we consider the operator 
$P_1:X\rightarrow Y\subseteq X$ given by $P_1(x)=e^*_1(x)e_1+e^*_2(x)e_2$ then $P_1$ is a projection onto $Y$ with $\pi_2(P_1)=\sqrt{2}=\pi_2(I_Y)$.

As $Y$ is not 1-complemented, there exists $y\in X$ such that $\|y\|<\|P_1(y)\|=1$.  We now choose a new orthonormal basis $(f_1,f_2)$ for $Y$ with biorthogonal functionals $(f^*_1,f^*_2)$ in $X^*$ such that $f_1=P_1(y)$ and $\|f^*_1\|=\|f^*_2\|=1$.   
The operator 
$P_2:X\rightarrow Y\subseteq X$ given by $P_2(x)=f^*_1(x)f_1+f^*_2(x)f_2$ is a projection onto $Y$ with $\pi_2(P_2)=\sqrt{2}=\pi_2(I_Y)$.   For the sake of contradiction, we assume that $P_1=P_2$.  Then, 
$P_2(y)=P_1(y)=f_1$.  Thus we have that $f_1^*(y)=1$ and $f_2^*(y)=0$.  Hence, $\|P_2(y)\|=|f_1^*(y)|\leq\|y\|$.  This contradicts that $P_2=P_1$.  We thus have two different linear extensions of $I|_Y$ with $\pi_2(P_1)=\pi_2(P_2)=\pi_2(I|_Y)$ and hence $\Pi_2(X,X)$ is not strictly convex.
\end{proof}

\section{Optimal frames for erasures}

We will now prove a result in the spirit of Holmes and Paulsen \cite{Holmes-Paulsen}, by introducing a numerical measure
of how well a frame reconstructs vectors when one or more of the frame coefficients of a vector is lost.  Let $(x_j,x_j^*)_{j=1}^N$ be  a Schauder frame.
Fix $1 \le k_1<\cdots<k_m \le N$, and suppose that the $k_1,...,k_m$ frame coefficients (that is, the measurements corresponding to $x_{k_1}^*,...,x_{k_m}^*$) are lost.
Let $S_{[k_1,...,k_m]}$ be the frame operator associated to the situation with the lost coefficients.  That is, $S_{[k_1,...,k_m]}=\sum_{i\neq k_1,...,k_m} x_i^*\otimes x_i$.
Define the \emph{maximal erasure error} for the frame $(x_j,x_j^*)_{j=1}^N$ due to the loss of $m$ coordinates to be
$$
e_m\big( (x_j,x_j^*)_{j=1}^N  \big) = \max_{1 \le k_1<\cdots<k_m \le N} \n{ S - S_{[k_1,...,k_m]}}$$
Note that in the case of the loss of one coordinate we have that,
$$e_1\big( (x_j,x_j^*)_{j=1}^N  \big) = \max_{1 \le j \le N} \n{ S - S_{[j]}}= \max_{1 \le j \le N}\|x_j^*\otimes x_j\|=\max_{1 \le j \le N}\|x_j^*\|\|x_j\|$$

In the case when $X$ is a finite dimensional Hilbert space, the equal norm tight frames minimize the erasure error due to the loss of one coordinate \cite{GKK}, and the equiangular frames (when they exist) minimize the erasure error due to the loss of two coordinates \cite{Holmes-Paulsen}.  In the case of one erasure, we have the corresponding result for Banach spaces.

\begin{proposition}\label{P:opE}
Let $N\geq n$ and let $X$ be an $n$-dimensional Banach space such that there exists a FUNTF for $X$ of length $N$.   Suppose that $(x_j,f_j)_{j=1}^N$ is a Schauder frame for $X$.  Then the following are equivalent.
\begin{enumerate}[(a)]
\item $(x_j,f_j)_{j=1}^N$ minimizes the maximal error due to one erasure.
\item  $\|x_j\|\|f_j\|=f_j(x_j)=n/N$ for all $1\leq j\leq N$.
\item $(\frac{x_j}{\|x_j\|},\frac{f_j}{\|f_j\|})_{j=1}^N$ is a FUNTF.
\end{enumerate}
\end{proposition}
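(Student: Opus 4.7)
My plan is to establish $(a) \Leftrightarrow (b)$ via a direct lower-bound argument on the erasure error and then $(b) \Leftrightarrow (c)$ by passing to the normalized pair.

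The starting point is the identity $e_1\bigl((x_j,f_j)_{j=1}^N\bigr) = \max_j \|f_j\|\|x_j\|$ already recorded in the text, together with the trace identity $\sum_j f_j(x_j) = \tr(S) = \tr(I_X) = n$ coming from the Schauder frame condition $S = I_X$. Combining these with $|f_j(x_j)| \leq \|f_j\|\|x_j\|$ and the triangle inequality yields
\[
N \cdot e_1 \;\geq\; \sum_{j=1}^N \|f_j\|\|x_j\| \;\geq\; \sum_{j=1}^N |f_j(x_j)| \;\geq\; \Bigl|\sum_{j=1}^N f_j(x_j)\Bigr| \;=\; n,
\]
so $e_1 \geq n/N$ for every Schauder frame. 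The hypothesis that a FUNTF $(y_j,y_j^*)_{j=1}^N$ exists, with frame operator $(N/n) I_X$, gives a rescaled Schauder frame $\bigl((n/N) y_j,\, y_j^*\bigr)$ for which $\|f_j\|\|x_j\| = n/N$ throughout, confirming that $n/N$ is the true minimum.

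For $(a) \Leftrightarrow (b)$, the forward direction is an equality analysis of the chain above: $e_1 = n/N$ forces $\|f_j\|\|x_j\| = n/N$ for every $j$ (since $\sum a_j \leq N \max a_j$ is an equality only when every $a_j$ equals the max), and then $\sum |f_j(x_j)| = |\sum f_j(x_j)| = n$ forces the complex numbers $f_j(x_j)$ to all be non-negative reals with modulus $n/N$, giving $f_j(x_j) = n/N$. The reverse direction is immediate upon substituting (b) into $e_1 = \max_j \|f_j\|\|x_j\|$.

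For $(b) \Rightarrow (c)$, set $y_j = x_j/\|x_j\|$ and $y_j^* = f_j/\|f_j\|$; then $\|y_j\| = \|y_j^*\| = 1$, $y_j^*(y_j) = f_j(x_j)/(n/N) = 1$, and the normalized frame operator is $\sum_j y_j^* \otimes y_j = (N/n) \sum_j f_j \otimes x_j = (N/n) I_X$, a scalar multiple of the identity, so $(y_j,y_j^*)$ is a FUNTF. For $(c) \Rightarrow (b)$, writing $c_j = \|f_j\|\|x_j\|$ (with $f_j(x_j) = c_j$ forced by $y_j^*(y_j) = 1$), the Schauder and FUNTF frame operator conditions combine to give the linear relation $\sum_j (c_j - n/N)\, y_j^* \otimes y_j = 0$. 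I expect the main obstacle to lie in closing this last implication, since the rank-one operators $y_j^* \otimes y_j$ can be linearly dependent when $N$ exceeds $n$, so the displayed relation does not immediately force $c_j \equiv n/N$; to close this step I would aim to invoke the smoothness of $\Pi_2(X,X)$ at $I_X$ established in Theorem \ref{T:smooth}, whose uniqueness of the normalizing operator for $I_X$ in trace duality should translate into uniqueness of the scalar weights $c_j$ recovering $I_X$ in the decomposition $\sum_j c_j y_j^* \otimes y_j = I_X$.
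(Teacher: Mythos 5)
Your handling of $(a)\Leftrightarrow(b)$ and of $(b)\Rightarrow(c)$ is correct and follows essentially the same route as the paper: the chain $N\,e_1\big((x_j,f_j)_{j=1}^N\big) \ge \sum_j\|f_j\|\|x_j\| \ge \sum_j f_j(x_j) = \tr(I_X) = n$, together with the rescaled FUNTF $\big(\tfrac{n}{N}y_j,y_j^*\big)$ guaranteed by the hypothesis, identifies $n/N$ as the minimal erasure error and forces $\|f_j\|\|x_j\| = f_j(x_j) = n/N$ at any minimizer; and the computation $\sum_j \tfrac{f_j}{\|f_j\|}\otimes\tfrac{x_j}{\|x_j\|} = \tfrac{N}{n}\sum_j f_j\otimes x_j = \tfrac{N}{n}I_X$ gives $(b)\Rightarrow(c)$. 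The genuine issue is the direction $(c)\Rightarrow(b)$, which you rightly single out as the obstacle but do not close, and which the paper itself dispatches with the single sentence that ``the argument can be reversed.''

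Your proposed repair via Theorem \ref{T:smooth} cannot work. Smoothness of $\Pi_2(X,X)$ at $I_X$ gives uniqueness of the operator $S$ with $\pi_2(S)=1$ norming $I_X$ in trace duality; it says nothing about uniqueness of the coefficients $c_j$ in a decomposition $\sum_j c_j\, y_j^*\otimes y_j = I_X$, and no argument can supply that, because the implication $(c)\Rightarrow(b)$ is false in general. Take $X=\ell_2^2$ (where $\Pi_2(X,X)$ is even a Hilbert space), $N=5$, and $y_j=(\cos\theta_j,\sin\theta_j)$ with $\theta_j=2\pi j/5$, so that $(y_j,y_j^*)_{j=1}^5$ is a FUNTF with $\sum_j y_j^*\otimes y_j=\tfrac52 I$. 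The five projections $y_j^*\otimes y_j$ lie in the $3$-dimensional space of symmetric matrices, and the weights $d_j=\cos(4\theta_j)$ satisfy $\sum_j d_j\, y_j^*\otimes y_j=0$. Hence for small $\varepsilon>0$ the numbers $c_j=\tfrac25+\varepsilon d_j$ are positive, not all equal, and satisfy $\sum_j c_j\, y_j^*\otimes y_j=I$. Setting $x_j=c_jy_j$ and $f_j=y_j^*$ yields a Schauder frame whose normalization is exactly the FUNTF $(y_j,y_j^*)_{j=1}^5$, so $(c)$ holds, while $\|x_j\|\|f_j\|=c_j\neq\tfrac25=n/N$ for some $j$, so $(b)$ and $(a)$ fail. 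Your diagnosis of where the difficulty lies was exactly right; the correct conclusion is that only $(a)\Leftrightarrow(b)\Rightarrow(c)$ holds, and the remaining implication --- and with it the paper's own one-line proof of it --- is in error.
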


\begin{proof}
We first prove  $(b)\Rightarrow(c)$.  Suppose  $\|x_j\|\|f_j\|=f_j(x_j)=n/N$ for all $1\leq j\leq N$.  For any constant $d$ and nonzero constants $c_1,...,c_N$ we have that $(d c_j x_j, \frac{1}{c_j} f_j)_{j=1}^N$ has a frame operator of $d$ times the frame operator of $(x_j, f_j)_{j=1}^N$.  Thus, the frame operator of $(\frac{x_j}{\|x_j\|},\frac{f_j}{\|f_j\|})_{j=1}^N$ is $\frac{N}{n}I_X$.  Furthermore, $\|\frac{x_j}{\|x_j\|}\|=\|\frac{f_j}{\|f_j\|}\|=\frac{f_j}{\|f_j\|}(\frac{x_j}{\|x_j\|})=1$ for all $1\leq j\leq N$.  Thus $(\frac{x_j}{\|x_j\|},\frac{f_j}{\|f_j\|})_{j=1}^N$ is a FUNTF.  The argument can be reversed to show $(c)\Rightarrow(b)$ as any FUNTF has frame operator $\frac{N}{n} I_X$.

We now prove $(a)\Rightarrow(b)$ by contrapositive.  We assume that it is not the case that $\|x_j\|\|f_j\|=f_j(x_j)=n/N$ for all $1\leq j\leq N$.  
As, $(x_j,f_j)_{j=1}^N$ is a Schauder frame, its frame operator is $\sum_{j=1}^N f_j\otimes x_j=I_X$.  By taking the trace, we have that $\sum_{j=1}^N f_j(x_j)=n$.   Thus, $$
\sum_{j=1}^N \|x_j\|\|f_j\|\geq \sum_{j=1}^N f_j(x_j)=n.$$  Hence there exists $1\leq k\leq N$ such that $\|x_k\|\|f_k\|>\frac{n}{N}$.  We have that $\|f_k\otimes x_k\|=\|f_k\|\|x_k\|>n/N$ is the error due to the erasure of the $k$th coordinate.  However, there exists a FUNTF $(y_j,y_j^*)_{j=1}^N$ of $X$.  Thus, $(\frac{n}{N}y_j,y_j^*)_{j=1}^N$ is a Schauder frame of $X$ and $\frac{n}{N}$ is the error due to one erasure.  Thus, $(x_j,f_j)_{j=1}^N$ does not minimize the maximal error due to one erasure.

We now prove $(b)\Rightarrow(a)$.  We have previously shown that if $(x_j,f_j)_{j=1}^N$ minimizes the maximal error due to one erasure then it satisfies $(b)$.  However, any frame which satisfies $(b)$ has the same error due to one erasure of exactly $n/N$.  Thus, if $(x_j,f_j)_{j=1}^N$ satisfies $(b)$ then it minimizes the maximal error due to one erasure.
\end{proof}

\section{The case of real $\ell_1^n$}\label{S:ell1}

Theorem \ref{proposition-integer-trace-implies-sum-of-RONOPs} gives that for every $N\geq n$, complex $\ell_1^n$ has a length $N$ FUNTF.  
The factorization theorem of Lozanovski\u\i{}  used in the proof of Lemma \ref{lemma-diagonal-as-sum-of-RONOPs} suggests that understanding FUNTFs in real $\ell_1^n$  will give insight into the situation for real spaces with a normalized $1$-unconditional basis.  The following are some partial results for the case of real $\ell_1^n$, and by duality we have the same results for $\ell_\infty^n$ as well.

\begin{proposition}\label{P:n+1}
For all $n\in\N$, $\ell_1^n$  has a FUNTF of length $n+1$.
\end{proposition}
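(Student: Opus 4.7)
The plan is to construct an explicit length $n+1$ FUNTF for real $\ell_1^n$ by augmenting the standard unit vector basis with one additional "diagonal" vector, and then choosing the functionals cleverly so that the off-diagonal contributions cancel.

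More precisely, I would let $(e_j, e_j^*)_{j=1}^n$ be the standard unit vector basis and biorthogonal functionals of $\ell_1^n$, and define
$$
x_j = e_j, \quad x_j^* = e_j^* - \tfrac{1}{n}\sum_{i\neq j} e_i^* \qquad (1 \le j \le n),
$$
together with
$$
x_{n+1} = \tfrac{1}{n}\sum_{i=1}^n e_i, \qquad x_{n+1}^* = \sum_{i=1}^n e_i^*.
$$
The verifications $\|x_j\|_1=\|x_j^*\|_\infty=x_j^*(x_j)=1$ for all $1\le j \le n+1$ are immediate: for $j\le n$ the functional $x_j^*$ has entries in $\{1,-1/n\}$ so its $\ell_\infty$-norm is one, and $x_j^*(e_j)=1$; for $j=n+1$ the entries are $1/n$ summing to one, and the coordinates of $x_{n+1}^*$ are all one.

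Next I would compute the frame operator $S = \sum_{j=1}^{n+1} x_j^* \otimes x_j$ entry by entry in the basis $(e_j)$. Since $x_j = e_j$ for $j\le n$, the contribution of the first $n$ terms to the $(i,l)$-entry is supported on row $i$ and equals the $l$-th coefficient of $x_i^*$, namely $\delta_{il}+(-\tfrac{1}{n})(1-\delta_{il})$. The rank-one matrix $x_{n+1}^* \otimes x_{n+1}$ has every entry equal to $\tfrac1n$. Adding the diagonal and off-diagonal parts shows that $S = \tfrac{n+1}{n} I_{\ell_1^n}$, so $(x_j,x_j^*)_{j=1}^{n+1}$ is a FUNTF of length $n+1$.

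There is no real obstacle here beyond finding the right Ansatz; the $n=2$ case in Proposition \ref{P:frameFail} suggests that in $\ell_1^n$ one should retain the coordinate basis and add a single "balancing" vector, and the tidiest choice is the uniform vector $\tfrac1n\sum e_i$, which then forces (by the biorthogonality of $x_{n+1}^*$ with $x_{n+1}$) the functional $x_{n+1}^*=\sum e_i^*$; the required off-diagonal cancellation then dictates the perturbation $-\tfrac{1}{n}$ in the remaining functionals, and fortunately this perturbation is small enough to keep each $x_j^*$ of $\ell_\infty$-norm one.
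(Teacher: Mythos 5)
Your construction is correct, and I verified the key computation: for each $m$, the first $n$ terms contribute $e_m - \tfrac{1}{n}\sum_{j\neq m} e_j$ to $S(e_m)$, the last term contributes $\tfrac{1}{n}\sum_{i=1}^n e_i$, and the sum is $\tfrac{n+1}{n}e_m$, so $S=\tfrac{n+1}{n}I$. However, your route is genuinely different from the paper's. The paper keeps the functionals as the extreme sign vectors $x_j^* = e_j^* - \sum_{i\neq j}e_i^*$ (the unique norming functionals of the perturbed frame vectors), perturbs the \emph{vectors} as $x_j = ae_j - \sum_{i\neq j}\tfrac{1-a}{n-1}e_i$, and then invokes the intermediate value theorem to find $a\in(0,1)$ making the off-diagonal entries of the frame operator vanish; that argument only closes for $n\geq 3$ and the case $n=2$ is handled separately. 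You instead keep the vectors canonical ($x_j=e_j$ plus the uniform vector) and perturb the \emph{functionals} to $e_j^* - \tfrac{1}{n}\sum_{i\neq j}e_i^*$, exploiting the fact that $e_j$ has many norming functionals in $\ell_\infty^n$ and choosing a non-obvious one. What you gain is a fully explicit, computation-free-of-IVT construction that works uniformly for all $n$, including $n=2$ (where it is consistent with the paper's later observation that odd-length FUNTFs of $\ell_1^2$ must contain a canonical basis vector up to sign). What the paper's version buys is a configuration in which each functional is the \emph{unique} norming functional of its vector, which is aesthetically closer to the Hilbert-space picture, at the cost of an existence argument for the parameter $a$.
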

\begin{proof}
Note that we just need to consider the case $n\geq 3$ as every 2 dimensional Banach space with a symmetric basis has a FUNTF of all possible sizes.  Let $(e_j)_{j=1}^n$ be the unit vector basis for $\ell_1^n$ and $(e_j^*)_{j=1}^n$ be the biorthogonal functionals.
For $1\leq j\leq n$ and $0\leq a\leq 1$ we let 
$$x_j=a e_j-\sum_{i\neq j} (1-a)(n-1)^{-1} e_i\quad\textrm{ and }\quad x_{n+1}=\sum_{i=1}^n n^{-1} e_i.
$$
The corresponding normalizing functionals are
$$x^{*}_j= e^*_j-\sum_{i\neq j} e^*_i\quad\textrm{ and }\quad x^*_{n+1}=\sum_{i=1}^n  e^*_i.
$$
We will prove that there exists some constant $a\in(0,1)$ such that $(x_j,x_j^*)_{j=1}^{n+1}$ is a FUNTF.
For each $1\leq j\leq n+1$, it is clear that $\|x_j\|=\|x_j^*\|=x_j^*(x_j)=1$.  We now check the frame operator of $(x_j,x_j^*)_{j=1}^{n+1}$.
For $1\leq m\leq n$ we have that 
\begin{align*}
\sum_{j=1}^{n+1} x^*_j(e_m)e^*_m(x_j)&=x^*_m(e_m)e^*_m(x_m) +\! \sum_{j\neq m,n+1}\! x^*_j(e_m)e^*_m(x_j)+x^*_{n+1}(e_m)e^*_m(x_{n+1})\\
&=a + (n-1)(1-a)(n-1)^{-1}+n^{-1}=1+n^{-1}
\end{align*}

For $1\leq m,k\leq n$ with $m\neq k$ we have that 
\begin{align*}
\sum_{j=1}^{n+1} x^*_j(e_m)e^*_k(x_j)&=x^*_m(e_m)e^*_k(x_m) + x^*_k(e_m)e^*_k(x_k) +\!\! \sum_{j\neq k,m,n+1}\!\! x^*_j(e_m)e^*_k(x_j)+x^*_{n+1}(e_m)e^*_k(x_{n+1})\\
&=-(1-a)(n-1)^{-1}-a  + (n-2)(1-a)(n-1)^{-1}+n^{-1}\\
&=-a  + (n-3)(1-a)(n-1)^{-1}+n^{-1}\\
\end{align*}
This value is positive for $a=0$ and negative for $a=1$.  Thus, there exists $a\in (0,1)$ such that $\sum_{j=1}^{n+1} x^*_j(e_m)e^*_k(x_j)=0$.  This proves that the frame operator of $(x_j,x_j^*)_{j=1}^{n+1}$ is $(1+\frac{1}{n})$ times the identity.
\end{proof}

In the previous proposition, we gave a construction of a FUNTF of $n+1$ vectors in $\ell_1^n$ for $n\geq3$.  The proof only considered the case $n\geq 3$ because we already knew the result for $n=2$.  This is fortunate, because as the following proposition shows, the construction in Proposition \ref{P:n+1} actually fails for $n=2$.

\begin{proposition}
Every FUNTF of odd length  in $\ell_1^2$ includes an element of the canonical basis (up to a sign).
\end{proposition}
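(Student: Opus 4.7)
The plan is to parametrize the admissible pairs $(x, x^*) \in \ell_1^2 \times \ell_\infty^2$ with $\|x\|_1 = \|x^*\|_\infty = x^*(x) = 1$, then write out the sum of the rank-one operators $x_j^* \otimes x_j$ as a $2 \times 2$ matrix and use the FUNTF identity $\sum_{j=1}^N x_j^* \otimes x_j = \tfrac{N}{2} I$ to extract a parity constraint on $N$.

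First I would write every unit vector in $\ell_1^2$ as $x = (\varepsilon_1 s, \varepsilon_2 (1-s))$ with $s \in [0,1]$ and $\varepsilon_1, \varepsilon_2 \in \{-1, 1\}$. The key observation is that when $s \in (0,1)$, the requirement $x^*(x) = 1 = \|x^*\|_\infty$ forces $x^* = (\varepsilon_1, \varepsilon_2)$ uniquely (by the equality case of $|a\alpha + b\beta| \le (|a|+|b|)\max(|\alpha|,|\beta|)$), whereas $s \in \{0,1\}$ exactly corresponds to $x$ being a canonical basis vector up to sign, and there $x^*$ has a free coordinate in $[-1,1]$. Thus the hypothesis that the FUNTF contains no canonical basis vector (up to sign) translates cleanly into the assumption that $s_j \in (0,1)$ for every $j$.

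Next I would compute the matrix of $x_j^* \otimes x_j$ (entry $(i,k) = (x_j)_i (x_j^*)_k$) in the standard basis, which with $\sigma_j := \varepsilon_1^{(j)} \varepsilon_2^{(j)} \in \{\pm 1\}$ becomes
$$
\begin{pmatrix} s_j & \sigma_j s_j \\ \sigma_j (1 - s_j) & 1 - s_j \end{pmatrix}.
$$
Summing and setting the result equal to $\tfrac{N}{2} I$ yields: the two diagonal equations both reduce to $\sum_j s_j = N/2$, while the off-diagonal equations are $\sum_j \sigma_j s_j = 0$ and $\sum_j \sigma_j (1-s_j) = 0$. Adding them gives $\sum_j \sigma_j = 0$, which is impossible when $N$ is odd because each $\sigma_j = \pm 1$. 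This contradiction completes the proof.

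The only subtle point, and the step that must be argued carefully, is the uniqueness of the normalizing functional for non-axis-aligned vectors; this is precisely what makes the argument break down (correctly, as in Proposition~\ref{P:n+1}) once an axis-aligned vector is allowed, since such a vector contributes a matrix with a free off-diagonal parameter that can absorb the parity. Everything else is a direct computation once the parametrization is in place.
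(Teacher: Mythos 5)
Your proof is correct and follows essentially the same route as the paper's: parametrize the non-axis-aligned unit vectors so that the normalizing functional is forced to be a sign vector, write the frame operator as a sum of $2\times 2$ matrices, and add the two off-diagonal equations to obtain $\sum_j \sigma_j = 0$, which is impossible for odd $N$ by parity. The only differences are cosmetic — you carry the sign product $\sigma_j$ instead of normalizing the first coordinates to be positive, and you treat general odd $N$ directly rather than doing $N=3$ and remarking that the argument generalizes.
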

\begin{proof}
Suppose there is a FUNTF of length 3 in $\ell_1^2$, say consisting of vectors $x_1$, $x_2$ and $x_3$, which does not include an element of the canonical basis or their negatives.
This implies that all the coordinates of $x_1$, $x_2$ and $x_3$ are nonzero.
Replacing $x_i$ by $-x_i$ if necessary, we can assume that all three vectors have positive first coordinate. By reflecting the second coordinate, we can assume that both coordinates of $x_1$ are positive.
Therefore, we may assume
$$
x_1 = (a , 1-a), \quad x_2 = (b, \varepsilon_1 (1-b)), \quad x_3 = (c , \varepsilon_2 (1-c))
$$
with $a,b,c \in (0,1)$ and $\varepsilon_1, \varepsilon_2 = \pm 1$.
Their corresponding normalizing functionals must then be
$$
x^*_1 = (1 , 1), \quad x^*_2 = (1,\varepsilon_1), \quad x^*_3 = (1 ,\varepsilon_2)
$$
Note that
$$
x_1^* \otimes x_1 = \begin{pmatrix}
a &  1-a\\ 
a & 1-a\\
\end{pmatrix}
, \quad 
x_2^* \otimes x_2 = \begin{pmatrix}
b &  \varepsilon_1 (1-b)\\ 
\varepsilon_1 b & 1-b\\
\end{pmatrix},
\quad
x_3^* \otimes x_3 = \begin{pmatrix}
c &  \varepsilon_2 (1-c)\\ 
\varepsilon_2 c & 1-c\\
\end{pmatrix}
$$
It follows that in order to have a FUNTF, $a+b+c = 3/2$ and moreover we need to choose the signs  $\varepsilon_1, \varepsilon_2$ in such a way that
$$
a + \varepsilon_1b + \varepsilon_2 c = 0 \qquad \text{and} \qquad 1-a + \varepsilon_1 (1-b) + \varepsilon_2 (1-c) = 0.
$$
But adding both of these equations together gives $1 + \varepsilon_1 + \varepsilon_2 =0$, impossible due to parity.
This proves that there does not exist a length 3 FUNTF which does not contain one of the canonical basis vectors (up to a sign). Furthermore,
the same parity argument shows that a FUNTF for $\ell_1^2$ of any odd length must include an element of the canonical basis (up to a sign).
\end{proof}

For general $n\in\N$, we know that $\ell_1^n$ has a FUNTF of length $n$ and a FUNTF of length $n+1$.  As the union of FUNTFs is a FUNTF, in order to determine if $\ell_1^n$ has a FUNTF of all lengths at least the dimension we would just need to find FUNTFs of lengths $n+2,n+3,...,2n-1$.  The following proposition checks the remaining cases for dimensions 3 and 4.  Thus, $\ell_1^2$, $\ell_1^3$, and $\ell_1^4$ all have FUNTFs of all lengths at least their dimension.

\begin{proposition}
\begin{enumerate}[(a)]
\item
There exists a FUNTF of length $5$ in $\ell_1^3$.
\item There exists a FUNTF of length $6$ in $\ell_1^4$.
\item There exists a FUNTF of length $7$ in $\ell_1^4$.
\end{enumerate}
\end{proposition}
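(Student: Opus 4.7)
The plan is to prove each of the three claims by explicitly exhibiting a FUNTF of the required length. For each case, one writes down vectors $(x_j)_{j=1}^N\subset \ell_1^n$ and functionals $(x_j^*)_{j=1}^N\subset \ell_\infty^n$ satisfying $\|x_j\|_1=\|x_j^*\|_\infty = x_j^*(x_j)=1$, and verifies directly, entry by entry, that the frame operator $\sum_{j=1}^N x_j^*\otimes x_j$ equals $(N/n)I$. Once such a candidate list is produced, the verification is a finite and routine linear-algebra check.

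Explicit construction is necessary because the earlier results in the paper do not cover these particular lengths in the real case. Proposition \ref{proposition-integer-trace-implies-sum-of-RONOPs} yields FUNTFs of every length $N\geq n$ only in the complex case (or when $n=2$), so it does not apply to real $\ell_1^3$ or real $\ell_1^4$. The trick of concatenating existing FUNTFs of lengths $n$ (the canonical basis) and $n+1$ (from Proposition \ref{P:n+1}) yields FUNTFs whose length lies in the additive semigroup $\{an+b(n+1):a,b\geq 0\}$; since $\gcd(n,n+1)=1$ this covers all sufficiently large $N$, but leaves gaps precisely at $N=5$ for $n=3$ and at $N=6,7$ for $n=4$, which are the three lengths addressed by the present proposition.

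The form of the construction I would seek exploits the non-uniqueness of normalizing functionals in $\ell_1^n$: if $x\in\ell_1^n$ has support $S\subsetneq\{1,\dots,n\}$, then any functional with $i$-th entry equal to the sign of $x_i$ on $S$ and arbitrary entries in $[-1,1]$ on $\{1,\dots,n\}\setminus S$ is a valid normalizing functional for $x$. Taking the $x_j$'s to be a mixture of canonical basis vectors and vectors with small supports such as $(1/2,\pm 1/2,0,\dots)$ provides many free parameters in the functionals. The $n$ diagonal equations for the frame operator pin down the absolute values of the coordinates of the $x_j$'s (and hence their rough shape), while the $n(n-1)$ off-diagonal cancellation equations become a linear system in the free functional parameters, subject to the box constraints $|\alpha|\leq 1$. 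The hard part will be choosing the support pattern of the $x_j$'s so that this linear system is feasible within the box; once a valid pattern is found, writing down the vectors and checking the frame identity is routine. Guided by the construction in Proposition \ref{P:n+1}, I would expect that for (a) a workable configuration uses two vectors supported on $\{1,2\}$ (of the form $(1/2,\pm 1/2,0)$), a basis vector $e_3$, a vector supported on $\{1,3\}$, and one additional vector with asymmetric coordinates, and that analogous recipes one dimension higher handle (b) and (c).
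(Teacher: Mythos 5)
Your overall strategy --- exhibit explicit vectors and functionals with $\|x_j\|_1=\|x_j^*\|_\infty=x_j^*(x_j)=1$ and verify entry by entry that $\sum_j x_j^*\otimes x_j=(N/n)I$ --- is exactly the paper's approach, and your explanation of why lengths $5$ (for $n=3$) and $6,7$ (for $n=4$) are the ones requiring separate treatment is essentially right (though note that $11$ is also outside the numerical semigroup generated by $4$ and $5$; it is only covered \emph{after} one has a length-$7$ FUNTF, since $11=4+7$). However, the proposal never actually establishes existence: you describe the shape of a candidate configuration you ``would seek'' and ``would expect'' to work, and you explicitly defer the key step --- showing that the off-diagonal cancellation system is feasible within the box constraints --- to a search you do not carry out. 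For an existence statement proved by explicit construction, that deferred step \emph{is} the proof; without either concrete vectors or an a priori feasibility argument, nothing has been shown.

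It is also worth noting that the configuration you sketch for (a) (two vectors supported on $\{1,2\}$, a basis vector $e_3$, a vector supported on $\{1,3\}$, and one asymmetric vector, with the off-diagonal cancellation achieved by tuning free parameters of the functionals on the complements of the supports) is considerably more delicate than what is actually needed, and there is no evidence given that it is feasible. The paper instead uses a sign-symmetric ``pyramid'': for (a), $x_1=(1,0,0)$ together with the four vectors $(-a,\pm b,\pm b)$, $a+2b=1$, with normalizing functionals equal to the corresponding sign vectors. The off-diagonal entries then vanish automatically by the $\pm$ symmetry, and the diagonal gives $1+4a=4b=5/3$, solved by $a=1/6$, $b=5/12$; parts (b) and (c) are handled by the analogous four-dimensional configurations. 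If you want to complete your proof, you should either adopt such a symmetric ansatz (which reduces the problem to one or two scalar equations) or actually solve and verify the linear system your configuration produces.
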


\begin{proof}
(a) We would like to use a collection of vectors
$$
x_1 = (1, 0, 0),
x_2 = (-a, b ,b ),
x_3 = (-a, -b ,b ),
x_4 = (-a, b ,-b ),
x_5 = (-a, -b ,-b ).
$$
with $a,b>0$ and $a + 2b = 1$.
(our choice of $-a$ instead of $a$ is so that the frame looks more like a ``pyramid'' in $\ell_1^3$).
The corresponding normalizing functionals are
$$
x^*_1 = (1, 0, 0),
x^*_2 = (-1, 1 ,1 ),
x^*_3 = (-1, -1 ,1 ),
x^*_4 = (-1, 1 ,-1 ),
x^*_5 = (-1, -1 ,-1 ).
$$
A calculation shows that 
$$
\sum_{j=1}^5 x^*_j \otimes x_j = \begin{pmatrix}
1+4a &  0 & 0\\ 
0 &4b & 0\\
0 &0 &4b
\end{pmatrix},
$$
so choosing $a = 1/6$ and $b = 5/12$ works.

(b) For $a,,b,c,d>0$ with $a+2b=c+d=1$, take 
\begin{multline*}
x_1 = (a, b ,b ,0),
x_2 = (a, -b ,b ,0),
x_3 = (a, b ,-b ,0), \\
x_4 = (a, -b ,-b ,0),
x_5 = (c, 0, 0, d), 
x_6 = (c,0,0,-d).
\end{multline*}
and
\begin{multline*}
x^*_1 = (1, 1 ,1 ,0),
x^*_2 = (1, -1 ,1 ,0),
x^*_3 = (1, 1 ,-1 ,0), \\
x^*_4 = (1, -1 ,-1 ,0),
x^*_5 = (1, 0, 0, 1), 
x^*_6 = (1,0,0,-1).
\end{multline*}
A calculation shows that 
$$
\sum_{j=1}^6 x^*_j \otimes x_j = \begin{pmatrix}
4a+2c &  0 & 0 &0\\ 
0 &4b & 0 &0 \\
0 &0 &4b &0 \\
0 &0 &0 &2d
\end{pmatrix},
$$
We then choose $a=1/4$, $b=3/8$, $c=1/4$, $d=3/4$.

(c) The argument is quite similar: choose vectors as in the previous example with $a=1/8$, $b=7/16$, $c=5/8$ and $d=3/8$, together with
$$
x_7 = (0,0,0,1), \qquad x^*_7 = (0,0,0,1).
$$
\end{proof}

\section{Open problems}
 We showed in Section \ref{S:exists} that a large class of finite dimensional Banach spaces have finite unit norm tight frames of every length at least the dimension of the space.  However, we do not have any examples where this is not possible.

\begin{question}
Does every $n$ dimensional Banach space have a length $N$ FUNTF  for all $N\geq n$?
\end{question}

It seems very difficult to create a method of constructing  FUNTFs of arbitrary size that works in any finite dimensional Banach space.  Thus, it may be best to focus first on specific classical Banach spaces.  We have shown in Section \ref{S:exists} that complex $\ell_1^n$ has a FUNTF of length $N$ for all $n\leq N$ and in Section \ref{S:ell1} that real $\ell_1^2$,  $\ell_1^3$, and $\ell_1^4$  each have FUNTFs of all lengths at least their dimension.  

\begin{question}
Does real $\ell_1^n$ have a length $N$ FUNTF for all $N\geq n$?
\end{question}

Note that for each $n$, there are only finitely many values of $N$ for which we do not know the answer to the question above. Indeed, it follows from our results that for any $N \ge n(n-1)$ a FUNTF of length $N$ does exist: if $N=n(n-1+m)+k$ with $0 \le k \le n-1$ and $m\ge 0$, we can take the union of $k$ copies of a FUNTF of length $n+1$ and $n-1-k+m$ copies of a FUNTF of length $n$.
 
A finite dimensional Banach space $X$ has a length $N$ FUNTF if and only if a scalar multiple of the identity operator on $X$ can be expressed as a sum of $N$ normalized rank $1$ projections.    In the Hilbert space case, operators which can be written as sums of norm one rank one projections are characterized as positive operators with integer trace \cite{Kornelson-Larson}.  Operators on complex Banach spaces  which can be written as sums of rank one projections (dropping the norm one condition) can be characterized as well
\cite{Bart-Ehrhardt-Silbermann-LogarithmicResidues}.

\begin{question}
Given a finite dimensional Banach space $X$, how can we characterize what operators may be expressed as sums of norm one rank one projections ?
\end{question}

If $X$ is not strictly convex then there exist rank one operators on $X$ where $\Pi_2(X,X)$ is not smooth, as $X^*$ is isometric to a subspace of rank one operators on $\Pi_2(X,X)$.  However, we do not have any example of a finite dimensional Banach space $X$ where $\Pi_2(X,X)$ is not smooth  at an invertible operator. 
We used that  $\Pi_2(X,X)$ is smooth at the identity to prove Theorem \ref{T:main}, which characterized FUNTFs in terms of the frame potential.  
 Knowing that $\Pi_2(X,X)$ was smooth at some invertible operator $T$ would be useful in studying approximate Schauder frames for $X$ whose frame operator is normed by $T$.

\begin{question}
Let $X$ be a finite dimensional Banach space.  Is $\Pi_2(X,X)$ smooth at every invertible operator on $X$?
\end{question}

It is clear that for all $n>1$,  $\Pi_2(\ell_2^n,\ell_2^n)$ is smooth, and that $\Pi_2(\ell_1^n,\ell_1^n)$ and $\Pi_2(\ell_\infty^n, \ell_\infty^n)$ are not smooth.  However, we do not have any results for other values of $p$. 


\begin{question}
Let $n\in\N$ and $1<p<\infty$.  Is $\Pi_2(\ell_p^n,\ell_p^n)$ smooth?
\end{question}

In Proposition \ref{P:opE} we characterized the Schauder frames which minimize the reconstruction error due to the erasure of one coefficient as rescalings of FUNTFs.
It would be quite interesting to say something about optimality when two frame coefficients are erased. In Hilbert spaces, the equiangular frames (when they exist) are optimal under two erasures.  It would be quite interesting to find a generalization of equiangular to certain Banach spaces.
This would require control of the norms of the operators
$$
x_i^* \otimes x_i + x_j^* \otimes x_j,\quad \textrm{ for } i\neq j.
$$


\begin{question}
What are some examples of $N\in\N$ and finite dimensional Banach spaces where we may characterize the length $N$ Schauder frames which minimize the maximal error due to two erasures?
\end{question}

\bibliographystyle{amsalpha}

\def\cprime{$'$} \def\cprime{$'$}
\providecommand{\bysame}{\leavevmode\hbox to3em{\hrulefill}\thinspace}
\providecommand{\MR}{\relax\ifhmode\unskip\space\fi MR }
\providecommand{\MRhref}[2]{%
  \href{http://www.ams.org/mathscinet-getitem?mr=#1}{#2}
}
\providecommand{\href}[2]{#2}

\end{document}